\setlist[enumerate]{align=left,leftmargin=*,labelsep*=5pt}
\DeclareMathOperator{\sinc}{sinc}
\DeclareMathOperator{\dist}{dist}
\DeclareMathOperator{\Lip}{Lip}
\DeclareMathOperator{\supp}{supp}
\newcommand{\abs}[1]{|#1|}
\newcommand{\rez}[1]{\frac{1}{{#1}}}
\newcommand{\bgl}[1]{\begin{equation}\label{#1}}
\newcommand{\egl}{\end{equation}}
\newcommand{\ri}{\right}
\newcommand{\li}{\left}
\newcommand{\N}{\mathbb{N}}
\newcommand{\Z}{\mathbb{Z}}
\newcommand{\Zprime}{\mathbb{Z}}
\newcommand{\Zpprime}{\mathbb{Z}\setminus\{0\}}
\newcommand{\eps}{\varepsilon}
\newcommand*{\od}{_{\rm o}}
\newcommand*{\ev}{_{\rm e}}
\newcommand*{\fe}{\ensuremath{f_{\rm e}}}
\newcommand*{\fo}{\ensuremath{f_{\rm o}}}
\newcommand{\R}{\mathbb{R}}
\newcommand{\C}{\mathbb{C}}
\newcommand{\A}{\EuScript{A}}
\newcommand*\wh[1]{\widehat{#1}}
\newcommand*{\PN}[1]{\li\bracevert\!{#1}\!\ri\bracevert\!\!}
\newcommand{\un}{\ensuremath{\infty}}
\newcommand{\bin}[2]{\Big(\genfrac{}{}{0pt}{}{#1}{#2}\Big)}
\newcommand{\p}[1]{\ensuremath{( #1 )}}
\newcommand{\pbig}[1]{\ensuremath{\bigl( #1 \bigr)}}
\newcommand{\pBig}[1]{\ensuremath{\Bigl( #1 \Bigr)}}
\newcommand{\pbigg}[1]{\ensuremath{\biggl( #1 \biggr)}}
\newcommand{\bk}[1]{\ensuremath{[ #1 ]}}
\newcommand{\bkbig}[1]{\ensuremath{\bigl[ #1 \bigr]}}
\newcommand{\br}[1]{\ensuremath{\{ #1 \}}}
\newcommand{\brbig}[1]{\ensuremath{\bigl\{ #1 \bigr\}}}
\newcommand{\brBig}[1]{\ensuremath{\Bigl\{ #1 \Bigr\}}}
\newcommand{\brbigg}[1]{\ensuremath{\biggl\{ #1 \biggr\}}}
\newcommand{\absbig}[1]{\ensuremath{\bigl| #1 \bigr|}} \let\bigabs\absbig
\newcommand{\absBig}[1]{\ensuremath{\Bigl| #1 \Bigr|}} \let\Bigabs\absBig
 \let\biggabs\absbigg
\newcommand{\norm}[1]{\ensuremath{\lVert #1 \rVert}}
\newcommand{\bignorm}[1]{\big\|#1\big\|}
\newcommand{\Bignorm}[1]{\Big\|#1\Big\|}
\newcommand{\biggnorm}[1]{\bigg\|#1\bigg\|}
\newcommand{\Biggnorm}[1]{\Bigg\|#1\Bigg\|}
\newcommand{\Oh}{\mathcal O}
\newcommand{\ds}{\displaystyle}
\newcommand{\Lone}{\ensuremath{L^1(\R)}}
\newcommand{\Ltwo}{\ensuremath{L^2(\R)}}
\newcommand{\Lp}{\ensuremath{L^p(\R)}}
\newcommand{\Wrp}{\ensuremath{W^{r,p}(\R)}}
\newcommand{\Walphatwo}{\ensuremath{W\bkbig{|v|^\alpha;\Ltwo}}}
\newcommand{\Walphaone}{\ensuremath{W\bkbig{|v|^\alpha;\Lone}}}
\newcommand{\Walphap}{\ensuremath{W\bkbig{|v|^\alpha;\Lp}}}
\newcommand{\CR}{\ensuremath{\CRb}}
\newcommand{\CRn}{\ensuremath{C_0(\R)}}
\newcommand{\CRb}{\ensuremath{C_b(\R)}}
\newcommand{\dt}{\,dt}
\newcommand{\du}{\,du}
\newcommand{\dv}{\,dv}
\newcommand{\Rae}{R^{\{\alpha\}}_{2j,\eps}\kern-1pt}
\newcommand*{\Ds}[1]{D_{\rm s}^{\{#1\}}\kern-2pt}
\newcounter{zaehler}
\def\neuerlabel{(\roman{zaehler})\hfil}
\newdimen\azlabelsep
\newdimen\aztopsep
\newdimen\azitemsep
\newenvironment{aufzaehlung}[1]%
{\begin{list}{}%
{\usecounter{zaehler}%
\settowidth{\labelwidth}{#1}
\leftmargin\labelwidth
\labelsep\azlabelsep
\addtolength{\leftmargin}{\labelsep}
\topsep\aztopsep
\itemsep\azitemsep
\renewcommand{\makelabel}[1]{\ifx##1\empty\neuerlabel\else ##1\fi}}}%
{\end{list}}
\newcommand{\ie}{i.\,e.\xspace}
\newcommand{\eg}{e.\,g.\xspace}
\theoremstyle{plain}
\newtheorem{theorem}{Theorem}[section]
\newtheorem{lemma}[theorem]{Lemma}
\newtheorem{proposition}[theorem]{Proposition}
\newtheorem{corollary}[theorem]{Corollary}
\theoremstyle{definition}
\theoremstyle{remark}
\begin{document}
\begin{frontmatter}



\title{The distance between the general Poisson summation formula and that for bandlimited functions; applications to quadrature formulae%
\footnote{Dedicated to the memory of Helmut Brass (1936--2011)}}

\author[author1]{Paul L.~Butzer}
\ead{butzer@rwth-aachen.de}

\author[author2]{Gerhard~Schmeisser}
\ead{schmeisser@mi.uni-erlangen.de}

\author[author1]{Rudolf L.~Stens\corref{cor1}}
\ead{stens@mathA.rwth-aachen.de}	
 \cortext[cor1]{Corresponding author}

\address[author1]{Lehrstuhl A für Mathematik, RWTH Aachen,
		52056 Aachen, Germany}

\address[author2]{Department of Mathematics, University of Erlangen-Nuremberg,\\
		91058 Erlangen, Germany}		

\begin{abstract}
The general Poisson summation formula of harmonic analysis and analytic number theory can be regarded as a quadrature formula with remainder. The purpose of this investigation is to give estimates for this remainder based on the classical modulus of smoothness and on an appropriate metric for describing the distance of a function from a Bernstein space. Moreover, to be more flexible when measuring the smoothness of a function, we consider Riesz derivatives of fractional order. It will be shown that the use of the above metric in connection with fractional order derivatives leads to estimates for the remainder, which are best possible with respect to the order and the constants.

\end{abstract}

\begin{keyword}


harmonic analysis \sep non-bandlimited functions \sep fractional order Riesz derivatives \sep Lipschitz spaces\sep Numerical integration\sep formulae with remainders \sep derivative-free error estimates


\MSC[2010] 41A80 \sep 65D30 \sep 65D32\sep 42A38 \sep 46E15  \sep 26A16 \sep 46E35 \sep 26A33

\end{keyword}

\end{frontmatter}

\vspace{-3.8ex}

\section{Introduction}
The general Poisson summation formula, involving a function $f$ and its Fourier transform $\wh f$, as defined in \eqref{eq_Fourier_def}, states
\begin{equation}\label{eq_Poisson_general}
  a\sum_{k\in\Z} f(ak) = \sqrt{2\pi} \sum_{k\in\Z} \wh{f}(b k)\qquad (a>0,\ b>0,\ ab=2\pi),
\end{equation}
holding under various assumptions on $f$.

If $f$ belongs to the Bernstein space $B_\sigma^1$ (see Section~\ref{sec_notations} for definitions and notations), then $\wh f(v)$ vanishes outside $(-\sigma,\sigma)$ and \eqref{eq_Poisson_general} takes the particular form  ($a=h:=2\pi/\sigma$, $b=\sigma$)
\[
  \int_{-\infty}^\infty f(t)\,dt = h \sum_{k\in\Z} f(hk)\,,
\]
which can be interpreted as an exact quadrature formula (trapezoidal rule).

When $f\notin B_\sigma^1$, then this quadrature formula is no longer exact, but one has to add a remainder term. Indeed, the general Poisson formula \eqref{eq_Poisson_general} can be restated as
\begin{equation}\label{eq_Poisson}
  \int_{-\infty}^\infty f(t)\dt = h \sum_{k\in\Z} f(hk) - \sqrt{2\pi} \sideset{}{'}\sum_{k\in\Zprime} \wh{f}(k\sigma)\,,
\end{equation}
the dash at the summation sign indicating that the term for $k=0$ is omitted.

A function space $\A$ guaranteeing the validity of \eqref{eq_Poisson} and suitable for our purposes is given by
\[
  \A:= \brbig{f\in L^1(\R)\cap \CRb; f\in \mathrm{BV}(\R) \text{ or } \wh f \in \mathrm{BV}(\R)},
\]
where $\mathrm{BV}(\R)$ denotes the set of all functions which are of bounded variation over $\R$. See \cite[Section~5.1.5]{Butzer-Nessel_1971} and \cite{Butzer-Stens_1983b}.

Using the space $\A$, we can state \eqref{eq_Poisson} more precisely as follows:

\begin{proposition}
Let $f\in\A$.  Then for $h=2\pi/\sigma>0$
\begin{equation}\label{eq_qudrature_formula}
  \int_{-\infty}^\infty f(t)\dt = h \sum_{k\in\Z} f(hk) + R_\sigma(f),
\end{equation}
where
\begin{equation}\label{eq_restglied_summe}
  R_\sigma(f) = -\sqrt{2\pi} \sideset{}{'}\sum_{k\in\Zprime} \wh{f}(k\sigma)\,.
\end{equation}
\end{proposition}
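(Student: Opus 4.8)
The plan is to read off \eqref{eq_qudrature_formula} directly from the general Poisson summation formula \eqref{eq_Poisson_general}, whose validity for every $f\in\A$ is guaranteed by the references quoted above; the remaining work is purely the bookkeeping needed to isolate the term $k=0$ and to recast the identity in the form of a quadrature formula with remainder.

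First I would fix $\sigma>0$ and specialize \eqref{eq_Poisson_general} to $a:=h=2\pi/\sigma$ and $b:=\sigma$, which satisfy $a>0$, $b>0$, $ab=2\pi$. This gives
\[
  h\sum_{k\in\Z}f(hk)=\sqrt{2\pi}\sum_{k\in\Z}\wh f(k\sigma),
\]
both series being convergent since $f\in\A$ — indeed this convergence is exactly what makes \eqref{eq_Poisson_general} meaningful, so I would not reprove it. Next I would split off the central term on the right. Since $f\in L^1(\R)$, the definition \eqref{eq_Fourier_def} of the Fourier transform gives the finite value $\wh f(0)=(2\pi)^{-1/2}\int_{-\infty}^\infty f(t)\dt$, hence $\sqrt{2\pi}\,\wh f(0)=\int_{-\infty}^\infty f(t)\dt$. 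Therefore
\[
  \sqrt{2\pi}\sum_{k\in\Z}\wh f(k\sigma)=\int_{-\infty}^\infty f(t)\dt+\sqrt{2\pi}\sideset{}{'}\sum_{k\in\Zprime}\wh f(k\sigma),
\]
and substituting this into the previous line and solving for the integral yields
\[
  \int_{-\infty}^\infty f(t)\dt=h\sum_{k\in\Z}f(hk)-\sqrt{2\pi}\sideset{}{'}\sum_{k\in\Zprime}\wh f(k\sigma),
\]
which is precisely \eqref{eq_qudrature_formula} together with the expression \eqref{eq_restglied_summe} for $R_\sigma(f)$.

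I do not expect a genuine obstacle. The only point requiring a word of justification is that, after removing the finite term $\sqrt{2\pi}\,\wh f(0)$, the truncated series $\sideset{}{'}\sum_{k\in\Zprime}\wh f(k\sigma)$ still converges and the rearrangement above is legitimate — but this is immediate once the full series $\sum_{k\in\Z}\wh f(k\sigma)$ is known to converge, which is part of the classical formula \eqref{eq_Poisson_general}. In other words, the entire substance of the Proposition is inherited from the Poisson summation formula, and the statement merely reinterprets it as an exact quadrature rule plus the remainder $R_\sigma(f)$; the subsequent sections will then be devoted to estimating this remainder.
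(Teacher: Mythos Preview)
Your proposal is correct and matches the paper's own treatment: the paper does not give a formal proof of this Proposition but simply presents it as the restatement \eqref{eq_Poisson} of the general Poisson formula \eqref{eq_Poisson_general}, obtained by specializing $a=h=2\pi/\sigma$, $b=\sigma$ and separating the $k=0$ term via $\sqrt{2\pi}\,\wh f(0)=\int_\R f(t)\dt$. The convergence of both series for $f\in\A$ is deferred to the references \cite[Section~5.1.5]{Butzer-Nessel_1971} and \cite{Butzer-Stens_1983b}, exactly as you indicate.
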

Concerning the remainder $R_\sigma(f)$, the following estimate is a particular case of our Theorem~\ref{thm_R_vs_omega_s} below. It is implicitly contained in \cite[Theorem~1]{Butzer-Stens_1983b} (with a larger constant).

\begin{proposition}
If $f\in W^{2,1}(\R)\cap \CRb$, then for $h=2\pi/\sigma$,
\begin{equation}\label{eq_main_0}
   \abs{R_\sigma(f)} \le \frac{h^2}{12}\norm{f''}_{\Lone}\qquad (\sigma>0).
\end{equation}
\end{proposition}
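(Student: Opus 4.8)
The plan is to estimate each Fourier value $\wh f(k\sigma)$ appearing in the series \eqref{eq_restglied_summe} by trading the two orders of smoothness of $f$ for a decay factor $1/(k\sigma)^2$, and then to sum the resulting absolutely convergent series. First I would check that $f\in W^{2,1}(\R)\cap\CRb$ lies in the space $\A$, so that the quadrature identity \eqref{eq_qudrature_formula}--\eqref{eq_restglied_summe} is applicable: since $f'\in\Lone$, the function $f$ is of bounded variation on $\R$ (with total variation at most $\norm{f'}_\Lone$), and $f\in\Lone\cap\CRb$ by hypothesis, so $f\in\A$ and $R_\sigma(f)=-\sqrt{2\pi}\,\sideset{}{'}\sum_{k\in\Zpprime}\wh f(k\sigma)$.

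For the main step I would invoke the differentiation rule for the Fourier transform: for $f\in W^{2,1}(\R)$ one has $\wh{f''}(v)=-v^2\wh f(v)$ for every $v\in\R$ (two integrations by parts, the boundary terms vanishing because $f',f''\in\Lone$). Combining this with the elementary bound $\bignorm{\wh g}_\un\le(2\pi)^{-1/2}\norm{g}_\Lone$ applied to $g=f''$ gives, for each $k\in\Zpprime$,
\[
  \bigabs{\wh f(k\sigma)}=\frac{\bigabs{\wh{f''}(k\sigma)}}{k^2\sigma^2}\le\frac{\norm{f''}_\Lone}{\sqrt{2\pi}\,k^2\sigma^2}.
\]
Summing over $k\in\Zpprime$ and using $\sideset{}{'}\sum_{k\in\Zpprime}k^{-2}=2\sum_{k\ge1}k^{-2}=\pi^2/3$ yields
\[
  \bigabs{R_\sigma(f)}\le\sqrt{2\pi}\cdot\frac{\norm{f''}_\Lone}{\sqrt{2\pi}\,\sigma^2}\sum_{k\in\Zpprime}\frac1{k^2}=\frac{\pi^2}{3\sigma^2}\,\norm{f''}_\Lone,
\]
and inserting $\sigma=2\pi/h$, i.e.\ $\pi^2/(3\sigma^2)=h^2/12$, gives precisely \eqref{eq_main_0}.

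I do not expect a genuine obstacle here: once the differentiation rule and the membership $f\in\A$ are in place, the whole argument is the short computation above, resting on the Basel identity $\sum_{k\ge1}k^{-2}=\pi^2/6$ and on the bookkeeping that converts $\sigma$ into $h$. The only point deserving a word of justification is the inclusion $W^{2,1}(\R)\cap\CRb\subset\A$, namely that an $L^1$-derivative entails bounded variation; this also secures, a posteriori, the absolute convergence of the series manipulated above. An alternative, slightly longer route — the one that generalizes to the sharp statements of Theorem~\ref{thm_R_vs_omega_s} — writes $R_\sigma(f)$ as the integral of $f''$ against the $h$-periodization of a truncated power, identifies that kernel with a Bernoulli polynomial of degree $2$, and reads off the constant $1/12$ from $B_2$; for \eqref{eq_main_0} alone, however, the Fourier-value estimate is the most economical.
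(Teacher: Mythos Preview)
Your argument is correct and is essentially the same as the paper's: the paper obtains \eqref{eq_main_0} as the case $s=2$ of Theorem~\ref{thm_R_vs_omega_s}, whose proof also factors $\bigabs{\wh f(k\sigma)}=(k\sigma)^{-s}\bigabs{\wh{f^{(s)}}(k\sigma)}$ and then sums $\zeta(s)$, the only cosmetic difference being that the paper bounds $\bigabs{\wh{f^{(s)}}(v)}$ via the modulus inequality \eqref{eq_Fourier_vs_omega} together with \eqref{eq_omega_<_1}, which collapses to your direct bound $\bigabs{\wh{f''}(v)}\le(2\pi)^{-1/2}\norm{f''}_{\Lone}$. Your verification of $f\in\A$ via $f'\in\Lone\Rightarrow f\in\mathrm{BV}(\R)$ matches the inclusion $W^{1,1}(\R)\cap\CRb\subset\A$ used in the paper.
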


The factor $h^2/12$ and the occurrence of the second order derivative $f''$ are well-known from the remainder of the trapezoidal rule for finite intervals. One has, \eg,
\[
 \int_{-X}^X f(t)\dt = \frac{h}{2}\pbig{f(-X)+f(X)} + h \sum_{k=-N+1}^{N-1} f(hk) - 2X\,\frac{h^2}{12}\, f''(\xi),
\]
with $N\in\N$, $h:=X/N$ and $\xi\in [-X, X]$.

The paper is organized as follows. Section~2 is devoted to notations and the definition of the function classes needed. In Section~3 we prove some estimates for the remainder $R_\sigma(f)$ in \eqref{eq_restglied_summe}, based on the modulus of smoothness. Section~4 is concerned with fractional order derivatives, which give rise to a finer scale when measuring the smoothness of a function.

In Section~5, we introduce the notion of the distance $\dist_\un(f, B_\sigma^1)$ of a function $f$ from the Bernstein space $B^1_\sigma$, which is used in Section~6 to give more precise estimates for the remainder $R_\sigma(f)$. Indeed, we prove in Sections~7 and~8 that the distance functional $\dist_\un(f, B_\sigma^1)$ and the remainder $R_\sigma(f)$ have the same asymptotic behaviour for $\sigma \to\un$. This means that the estimates of our Theorem~\ref{thm_main_0} are best possible with respect to the order, and in Section~9 we show that even the constants are best possible. Section~10 is concerned with some numerical examples which confirm our theoretical results.

Finally, in Section~\ref{sec_Brass} we commemorate the late Professor Helmut Brass since in this paper we follow his footsteps. In particular Section~\ref{sec_Moebius} was inspired by his work.

\section{Notations}\label{sec_notations}

By $\CRb$ we denote the class of all functions $f\colon\R \to \C$ that are continuous and bounded on $\R$. Further, $\CRn$ is the subspace of those functions of $\CRb$ satisfying $\lim_{x\to\pm\infty}f(x)=0$.

In the following we always assume $p=1$ or $p=2$. The Fourier transform $\wh{f}$ of a function $f\in \Lp$ is defined by
\begin{equation}\label{eq_Fourier_def}
  \wh{f}(v):=\rez{\sqrt{2\pi}} \,\int_\R f(u) e^{-iuv}\,du\qquad(v\in\R).
\end{equation}
For $p=1$, the integral exists as an ordinary Lebesgue integral while for $p=2$ it is defined by a limiting process; see \cite[\S\,5.2.1]{Butzer-Nessel_1971}.

For $\sigma>0$, let $B_\sigma^p$ be the \emph{Bernstein space} comprising all functions $f\in\Lp$, the Fourier transform of which vanishes outside $\p{-\sigma,\sigma}$.

\subsection{Sobolev spaces}\label{sec_Sobolev}
For $r\in\N$, denote by $\mathrm{AC}_{{\rm loc}}^{r-1}(\R)$ the class of all functions that are $(r-1)$-times locally absolutely continuous on $\R$; see \cite[pp.~6--7]{Butzer-Nessel_1971}. For $1\le p < \infty$, the following function classes have been considered in Fourier analysis:
\begin{equation*}
W^{r,p}(\R) := \Big\{f\in\Lp\;:\; f=\phi \text{ a.\,e., }
    \phi\in \mathrm{AC}_{{\rm loc}}^{r-1}(\R), \, \phi^{(k)}\in \Lp,\ 0\le k\le r\Big\};
\end{equation*}
see \cite[(3.1.48)]{Butzer-Nessel_1971}. For $f\in W^{r,p}(\R)$, we write $f^{(k)}$ instead of $\phi^{(k)}$ for $k=0,\dots, r$. By endowing $W^{r,p}(\R)$ with the norm
\begin{equation}\label{eq_Sobolev_norm}
  \|f\|_{W^{r,p}(\R)} :=  \bigg\{\sum_{k=0}^r \bignorm{f^{(k)}}_{\Lp}^p\bigg\}^{1/p},
\end{equation}
we may identify it as a \emph{Sobolev space.}

In connection with Fourier transforms, the following function classes will also be of of interest. For an arbitrary complex-valued function $\psi\colon\R\to\C$ we introduce
\[
  W\bkbig{\psi(v);\Lp}:=\Big\{f\in\Lp\;:\; \psi(v) \wh{f}(v)=\wh g(v),\ g\in\Lp\Big\}.
\]
In case $p=2$ this definition can be simplified to
\[
    W\bkbig{\psi(v);\Lp} = \brBig{f\in\Ltwo\;:\; \psi(v) \wh{f}(v)\in\Ltwo},
\]
since the Fourier transform is an isometry from $\Ltwo$ onto itself.

These classes can be used to give an alternative description of the Sobolev spaces in terms of the Fourier transform. Indeed, one has (see \cite[Theorem~5.2.21]{Butzer-Nessel_1971})
\begin{equation}\label{eq_Sobolev_equivalenz}
   \Wrp =W\bkbig{(iv)^r;\Lp}=\Big\{f\in\Lp\;:\; (iv)^r \wh{f}(v)=\wh g(v),\ g\in\Lp\Big\}.
\end{equation}
Furthermore, $\wh{f^{(r)}}(v)=(iv)^r\wh f(v)=\wh g(v)$ a.\,e., where $g$ is the function on the right-hand side of \eqref{eq_Sobolev_equivalenz}.

\subsection{Lipschitz spaces}\label{sec_Lipschitz}

The modulus of smoothness of $f\in \Lone$ of order $r\in\N$ is defined by
\[
        \omega_r(f;\delta;\Lone):= \sup_{|h|\le \delta} \|\Delta_h^r f\|_{\Lone}\qquad(\delta>0),
\]
where
\begin{equation*}
       (\Delta^r_h f)(x):= \sum_{j=0}^r (-1)^{r-j} \bin{r}{j} f(x+jh)
\end{equation*}
is the forward difference of order $r$ at $x$ with increment $h$.

For $r,s\in \N$ with $s<r$ and $0<\delta<\delta_1$ one has the estimates
\begin{align}
   \omega_{r}\p{f;\delta;L^1(\R)}\le{}& \omega_{r}\p{f;\delta_1;L^1(\R)} &&\pbig{f\in \Lone},\label{eq_omega_<_0}\\[1ex]
   \omega_{r}\p{f;\delta;L^1(\R)}\le{}& 2^r\norm{f}_{\Lone} &&\pbig{f\in \Lone},\label{eq_omega_<_1}\\[1ex]
   \omega_{r}\p{f;\delta;L^1(\R)}\le{}& \delta^r\norm{f^{(r)}}_{\Lone} &&\pbig{f\in W^{r,1}(\R)},\label{eq_omega_<_2}\\[1ex]
\omega_{r}\p{f;\delta;L^1(\R)}\le{}& \delta^s\omega_{r-s}\p{f^{(s)};\delta;L^1(\R)} &&\pbig{f\in W^{s,1}(\R)}.\label{eq_omega_<_3}
\end{align}
Furthermore, if $f\in\Lone$ is such that $\int_0^\delta t^{-s-1}\omega_{r}\p{f;t;L^1(\R)}\dt<\infty$, then $ f\in W^{s,1}(\R)$ and (see \eg \cite[Chap.~2, {\S}\,7 and p.~178]{DeVore-Lorentz_1993})
\begin{equation}
\omega_{r-s}\p{f^{(s)};\delta;L^1(\R)}\le{} C_r\int_0^\delta \frac{\omega_{r}\p{f;t;L^1(\R)}}{t^{s+1}}\dt\qquad\pbig{f\in \Lone}\label{eq_omega_<_4}
\end{equation}
for a constant $C_r>0$ depending only on $r$.

The Lipschitz classes of order $\alpha$, $0<\alpha\le r$, based on the modulus $\omega_r$ are defined by
\[
  \Lip_r(\alpha)=\Lip_r(\alpha;\Lone) :=\big\{f\in \Lone\;:\;\omega_{r}(f;\delta;\Lone)=\Oh(\delta^\alpha),\ \delta\to0+\big\}.
\]

Inequality \eqref{eq_omega_<_4} in particular implies the inclusion
\[
   \Lip_r(\alpha;L^1(\R))\subset W^{1,1}(\R) \qquad (r>1;\alpha>1),
\]
which in turn implies
\begin{equation}\label{eq_Lip_in_A}
  \Lip_r(\alpha;\Lone)\cap \CRb\subset W^{1,1}(\R)\cap \CRb\subset \A \qquad(r>1;\alpha>1),
\end{equation}
where the rightmost inclusion can be found \eg in \cite[Thm.~B]{Butzer-Stens_1983b}.

\section{Estimates of the remainder based on the modulus of smoothness}

\begin{theorem}
Let $f\in\A$.  Then for each $r\in\N$ and $\sigma>0$,
\begin{equation}\label{eq_omega_r}
   \abs{R_\sigma(f)}
   \le \frac 1{2^{r-1}}\sum_{k=1}^\un\omega_r\pBig{f;\frac{\pi}{\sigma k};L^1(\R)}.
%
\end{equation}
\end{theorem}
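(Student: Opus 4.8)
The plan is to start from the explicit series representation
\[
  R_\sigma(f) = -\sqrt{2\pi}\sideset{}{'}\sum_{k\in\Z}\wh f(k\sigma)
             = -\sqrt{2\pi}\sum_{k=1}^\infty\bigl(\wh f(k\sigma)+\wh f(-k\sigma)\bigr),
\]
so that \(\abs{R_\sigma(f)}\le\sqrt{2\pi}\sum_{k=1}^\infty\bigabs{\wh f(k\sigma)+\wh f(-k\sigma)}\). The key point is therefore to bound each pair \(\bigabs{\wh f(k\sigma)+\wh f(-k\sigma)}\) by a modulus of smoothness of \(f\). Writing out the Fourier transform via \eqref{eq_Fourier_def}, one has
\[
  \wh f(k\sigma)+\wh f(-k\sigma)
   = \frac{1}{\sqrt{2\pi}}\int_\R f(u)\bigl(e^{-iuk\sigma}+e^{iuk\sigma}\bigr)\,du
   = \frac{2}{\sqrt{2\pi}}\int_\R f(u)\cos(uk\sigma)\,du.
\]

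The standard device for turning such an oscillatory integral into a difference is a shift: since \(\int_\R f(u)\cos(uk\sigma)\,du = -\int_\R f(u+\pi/(k\sigma))\cos(uk\sigma)\,du\) (the substitution shifts the cosine by \(\pi\), flipping its sign), averaging the two expressions gives
\[
  \int_\R f(u)\cos(uk\sigma)\,du = \frac12\int_\R\bigl(f(u)-f(u+\tfrac{\pi}{k\sigma})\bigr)\cos(uk\sigma)\,du
  = -\frac12\int_\R (\Delta^1_{\pi/(k\sigma)}f)(u)\cos(uk\sigma)\,du,
\]
whence \(\bigabs{\wh f(k\sigma)+\wh f(-k\sigma)}\le \tfrac{1}{\sqrt{2\pi}}\norm{\Delta^1_{\pi/(k\sigma)}f}_{\Lone}\le\tfrac{1}{\sqrt{2\pi}}\,\omega_1\bigl(f;\tfrac{\pi}{k\sigma};\Lone\bigr)\). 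To get the general \(r\), iterate this shifting trick \(r\) times: applying the same \(\pi/(k\sigma)\)-shift argument to the integrand \((\Delta^{j}_{\pi/(k\sigma)}f)(u)\cos(uk\sigma)\) repeatedly produces the factor \(2^{-r}\) and the \(r\)-th difference \(\Delta^r_{\pi/(k\sigma)}f\), so that
\[
  \bigabs{\wh f(k\sigma)+\wh f(-k\sigma)}
  \le \frac{1}{2^{r-1}\sqrt{2\pi}}\,\omega_r\Bigl(f;\frac{\pi}{k\sigma};\Lone\Bigr).
\]
Summing over \(k\ge1\) and cancelling the \(\sqrt{2\pi}\) yields exactly \eqref{eq_omega_r}.

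The only genuine subtlety is the convergence and the legitimacy of the manipulations: one must know that the series \(\sum_k\wh f(k\sigma)\) converges (absolutely) so that \(R_\sigma(f)\) is well-defined and the rearrangement into pairs is valid, and that each integral identity is justified by translation-invariance of Lebesgue measure on \(\Lone\). Both are guaranteed by \(f\in\A\): the Poisson formula \eqref{eq_Poisson}, hence the representation \eqref{eq_restglied_summe}, holds for such \(f\), and \(f\in\Lone\) makes every difference \(\Delta^r_{\pi/(k\sigma)}f\) lie in \(\Lone\) with \(\norm{\Delta^r_\delta f}_{\Lone}\le\omega_r(f;\delta;\Lone)\) by definition. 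The main obstacle is thus not analytic depth but bookkeeping: correctly carrying out the \(r\)-fold iteration of the shift-and-average step and tracking the constant \(2^{-r}\) against the \(2\) coming from pairing \(k\) with \(-k\), which is what produces the stated \(2^{-(r-1)}\). If one prefers, the whole iteration can be packaged at once by noting that \(\int_\R f(u)\cos(uk\sigma)\,du = (-2)^{-r}\int_\R(\Delta^r_{\pi/(k\sigma)}f)(u)\cos(uk\sigma)\,du\), which follows by \(r\) applications of the elementary identity \(\int g(u)\cos(uk\sigma)\,du = -\tfrac12\int(\Delta^1_{\pi/(k\sigma)}g)(u)\cos(uk\sigma)\,du\) with \(g=\Delta^{j}_{\pi/(k\sigma)}f\).
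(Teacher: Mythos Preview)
Your proof is correct and follows essentially the same route as the paper. The paper simply quotes the known pointwise bound
\[
  \bigabs{\wh f(v)}\le \frac{1}{2^r\sqrt{2\pi}}\,\omega_r\Bigl(f;\frac{\pi}{|v|};L^1(\R)\Bigr)\qquad(v\ne 0)
\]
and sums it over $v=k\sigma$, $k\in\Z\setminus\{0\}$; you instead re-derive this bound inline by the shift-and-average trick (applied to the paired quantity $\wh f(k\sigma)+\wh f(-k\sigma)$), which is exactly how that inequality is proved in the first place. The bookkeeping and the final constant $2^{-(r-1)}$ come out the same either way.
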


\begin{proof}
We start with the inequality (cf.~\cite[p.~348]{Butzer-Schmeisser-Stens_2013})
\begin{equation}\label{eq_Fourier_vs_omega}
  \bigabs{\widehat f(v)} \le \frac 1{2^r\sqrt{2\pi}}\,\omega_r\Big(f;\frac{\pi}{\abs{v}};L^1(\R)\Big)  \qquad(v\ne 0),
\end{equation}
which immediately yields the desired estimate in view of

\begin{equation*}
  \abs{R_\sigma(f)} \le \sqrt{2\pi} \sum_{k\in\Z\setminus\{0\}}  \bigabs{\wh{f}(k\sigma)}
  \le \frac 1{2^{r-1}}\sum_{k=1}^\un\omega_r\pBig{f;\frac{\pi}{\sigma k};L^1(\R)}\qquad (\sigma > 0).\qedhere
\end{equation*}
\end{proof}

The infinite series in \eqref{eq_omega_r} may be divergent. If one, however, assumes that $f$ satisfies a Lipschitz condition of a certain order $\alpha > 0$, then it will be finite and tend to zero for $\sigma\to\infty$.

\begin{corollary}
a) If $f\in \Lip_r(\alpha;L^1(\R))\cap \CRb$ for some $\alpha >1$, then
\[
  R_\sigma(f)=\Oh\pbig{\sigma^{-\alpha}}\qquad (\sigma\to\un).
\]

\medskip\noindent
b) If $f\in W^{s,1}(\R)\cap \CRb$ with $f^{(s)}\in \Lip_r(\beta;L^1(\R))$ for some $s,r\in\N$ and $\beta >0$, then
\[
  R_\sigma(f)=\Oh\pbig{\sigma^{-s-\beta}}\qquad (\sigma\to\un).
\]
\end{corollary}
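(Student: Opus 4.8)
The plan is to feed the preceding Theorem---the bound $\abs{R_\sigma(f)}\le 2^{-(r-1)}\sum_{k=1}^\un\omega_r(f;\pi/(\sigma k);L^1(\R))$---with the hypothesized smoothness, so that the series collapses to a constant multiple of $\sigma^{-\alpha}$ (resp.\ $\sigma^{-s-\beta}$) times a convergent series of the form $\sum_{k\ge1}k^{-\gamma}$. Before invoking the Theorem one must check $f\in\A$: in case a) the constraint $0<\alpha\le r$ together with $\alpha>1$ forces $r\ge2$, whence $\Lip_r(\alpha;L^1(\R))\cap\CRb\subset\A$ by \eqref{eq_Lip_in_A}; in case b), $f\in W^{s,1}(\R)\cap\CRb\subset W^{1,1}(\R)\cap\CRb\subset\A$, again by \eqref{eq_Lip_in_A}.

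For a) I would fix $\delta_0,C>0$ with $\omega_r(f;\delta;L^1(\R))\le C\delta^\alpha$ on $(0,\delta_0]$, which is possible because $f\in\Lip_r(\alpha)$. For $\sigma>\pi/\delta_0$ every argument $\pi/(\sigma k)$ with $k\ge1$ lies in $(0,\delta_0]$, so the Theorem yields
\[
  \abs{R_\sigma(f)}\le\frac{1}{2^{r-1}}\sum_{k=1}^\un C\Big(\frac{\pi}{\sigma k}\Big)^\alpha=\frac{C\pi^\alpha}{2^{r-1}}\bigg(\sum_{k=1}^\un k^{-\alpha}\bigg)\sigma^{-\alpha},
\]
and the inner series converges precisely because $\alpha>1$, which is the assertion. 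The same device handles b): by \eqref{eq_omega_<_3} with orders $r+s$ and $s$ (admissible since $f\in W^{s,1}(\R)$ and $s<r+s$) one has $\omega_{r+s}(f;\delta;L^1(\R))\le\delta^s\omega_r(f^{(s)};\delta;L^1(\R))$, and since $f^{(s)}\in\Lip_r(\beta)$ the right-hand side is $\le C\delta^{s+\beta}$ on some interval $(0,\delta_0]$. Applying the Theorem with $r+s$ in place of $r$ and taking $\sigma>\pi/\delta_0$ gives $\abs{R_\sigma(f)}\le 2^{-(r+s-1)}C\pi^{s+\beta}\big(\sum_{k\ge1}k^{-(s+\beta)}\big)\sigma^{-s-\beta}$, the series converging since $s+\beta\ge1+\beta>1$. (An equally short route for b) avoids the order bumping: combine \eqref{eq_Fourier_vs_omega} with $\wh{f^{(s)}}(v)=(iv)^s\wh f(v)$ from \eqref{eq_Sobolev_equivalenz} to obtain $\abs{\wh f(v)}\le\abs{v}^{-s}(2^r\sqrt{2\pi})^{-1}\omega_r(f^{(s)};\pi/\abs{v};L^1(\R))$ for $v\ne0$, then sum over $v=k\sigma$, $k\in\Z\setminus\{0\}$.)

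I do not foresee a genuine obstacle, since the substantive estimate \eqref{eq_omega_r} has already been proved; only three bookkeeping points need attention: (i) verifying membership in $\A$ so the Theorem is available, as above; (ii) in b), matching the fixed order $r$ of the Lipschitz hypothesis on $f^{(s)}$ with the order of the modulus supplied by the Theorem, for which the difference factorization $\Delta^{r+s}_h=\Delta^s_h\Delta^r_h$ underlying \eqref{eq_omega_<_3} is exactly what is needed; and (iii) upgrading the qualitative ``$\omega_r=\Oh(\delta^\alpha)$ as $\delta\to0+$'' into a bound valid for \emph{every} term of the infinite series, which is handled by restricting to $\sigma$ so large that all arguments $\pi/(\sigma k)$, $k\ge1$, fall below the threshold $\delta_0$---harmless, since only the limit $\sigma\to\un$ is at issue.
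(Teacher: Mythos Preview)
Your proposal is correct and follows essentially the same route as the paper: insert the Lipschitz bound into the series from \eqref{eq_omega_r} for part~a), and for part~b) first apply \eqref{eq_omega_<_3} with $r$ replaced by $r+s$ and then proceed as in~a). Your additional care in verifying $f\in\A$ via \eqref{eq_Lip_in_A} and in handling the threshold $\delta_0$ makes explicit what the paper leaves implicit, but the substance is identical.
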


\begin{proof}
For the proof of a) one simply inserts the Lipschitz condition $\omega_r\p{f;\delta;L^1(\R)}=\Oh(\delta^\alpha)$ for $\delta\to 0+$ into the series, and for b) one additionally uses inequality \eqref{eq_omega_<_3} with $r$ replaced by $r+s$.
\end{proof}

If $f$ has at least a second order derivative, then the remainder can also be estimated in terms of a modulus of smoothness.

\begin{theorem}\label{thm_R_vs_omega_s}
If $f\in W^{s,1}(\R)\cap \CRb$ for some $s\in \N$, $s\ge 2$, then for each $r\in\N$ and $h = 2\pi/\sigma$,
\begin{equation}\label{eq_R_mit_modulus}
  \bigabs{R_\sigma(f)}\le h^{s}\frac{\zeta(s)}{\pi^s 2^{r+s-1}}\omega_r\pBig{f^{(s)};\frac h2;L^1(\R)}\qquad (h>0).
\end{equation}
Furthermore, one has
\begin{equation}\label{eq_R_mit_ableitung}
  \bigabs{R_\sigma(f)}\le h^{s}\frac{\zeta(s)}{\pi^s 2^{s-1}}\bignorm{f^{(s)}}_{\Lone}\qquad (h>0).
\end{equation}
%
\end{theorem}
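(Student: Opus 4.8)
The plan is to start from the series representation \eqref{eq_restglied_summe} of the remainder and apply the Fourier-transform identity for Sobolev functions. Since $f\in W^{s,1}(\R)\cap\CRb$, relation \eqref{eq_Sobolev_equivalenz} gives $\wh{f^{(s)}}(v)=(iv)^s\wh f(v)$ a.\,e., and because $f^{(s)}\in\Lone$ this holds for every $v\ne 0$ with $\wh{f^{(s)}}$ continuous; hence
\[
  \wh f(k\sigma)=\frac{\wh{f^{(s)}}(k\sigma)}{(ik\sigma)^s}\qquad(k\in\Z\setminus\{0\}).
\]
Plugging this into \eqref{eq_restglied_summe} and taking absolute values yields
\[
  \bigabs{R_\sigma(f)}\le\sqrt{2\pi}\sum_{k\in\Z\setminus\{0\}}\frac{\bigabs{\wh{f^{(s)}}(k\sigma)}}{\abs{k\sigma}^s}
  =\frac{2\sqrt{2\pi}}{\sigma^s}\sum_{k=1}^{\infty}\frac{\bigabs{\wh{f^{(s)}}(k\sigma)}}{k^s}.
\]

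Next I would bound each $\bigabs{\wh{f^{(s)}}(k\sigma)}$. For \eqref{eq_R_mit_modulus}, apply inequality \eqref{eq_Fourier_vs_omega} with $f$ replaced by $f^{(s)}$ and $v=k\sigma$:
\[
  \bigabs{\wh{f^{(s)}}(k\sigma)}\le\frac{1}{2^r\sqrt{2\pi}}\,\omega_r\Bigl(f^{(s)};\frac{\pi}{k\sigma};\Lone\Bigr)
  \le\frac{1}{2^r\sqrt{2\pi}}\,\omega_r\Bigl(f^{(s)};\frac{\pi}{\sigma};\Lone\Bigr),
\]
the last step by monotonicity \eqref{eq_omega_<_0} since $\pi/(k\sigma)\le\pi/\sigma$ for $k\ge1$. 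Summing the resulting constant bound over $k$ produces the factor $\sum_{k\ge1}k^{-s}=\zeta(s)$, which converges precisely because $s\ge2$. Collecting constants and substituting $\sigma=2\pi/h$ (so $\pi/\sigma=h/2$ and $\sigma^{-s}=h^s/(2\pi)^s$) gives
\[
  \bigabs{R_\sigma(f)}\le\frac{2\sqrt{2\pi}}{(2\pi/h)^s}\cdot\frac{\zeta(s)}{2^r\sqrt{2\pi}}\,\omega_r\Bigl(f^{(s)};\frac h2;\Lone\Bigr)
  =h^s\,\frac{\zeta(s)}{\pi^s\,2^{r+s-1}}\,\omega_r\Bigl(f^{(s)};\frac h2;\Lone\Bigr),
\]
which is \eqref{eq_R_mit_modulus}. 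For \eqref{eq_R_mit_ableitung} one instead uses the elementary bound $\bigabs{\wh{f^{(s)}}(v)}\le\frac{1}{\sqrt{2\pi}}\bignorm{f^{(s)}}_{\Lone}$ (the trivial estimate on the Fourier transform of an $L^1$ function) in place of \eqref{eq_Fourier_vs_omega}; tracking the constants in the same way yields the factor $\zeta(s)/(\pi^s 2^{s-1})$.

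I do not anticipate a genuine obstacle here; the argument is a direct combination of the Fourier characterization of Sobolev spaces, the known pointwise bound \eqref{eq_Fourier_vs_omega}, and summation of a $\zeta$-series. The only points requiring a little care are: (i) justifying the termwise identity $\wh f(k\sigma)=\wh{f^{(s)}}(k\sigma)/(ik\sigma)^s$ pointwise (not merely a.\,e.), which is fine since both sides are continuous in $v$ for $v\ne0$; (ii) checking that the interchange of summation and the absolute-value estimate is legitimate, which follows from absolute convergence once $s\ge2$; and (iii) bookkeeping the powers of $2$, $\pi$, and $\sqrt{2\pi}$ through the substitution $h=2\pi/\sigma$. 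One should also note that \eqref{eq_R_mit_ableitung} can alternatively be deduced from \eqref{eq_R_mit_modulus} by letting $r\to\infty$ after using $\omega_r(f^{(s)};\delta;\Lone)\le 2^r\bignorm{f^{(s)}}_{\Lone}$ from \eqref{eq_omega_<_1}, but the direct route above is cleaner.
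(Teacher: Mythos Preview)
Your proof is correct and follows essentially the same route as the paper: express $\wh f(k\sigma)=(ik\sigma)^{-s}\wh{f^{(s)}}(k\sigma)$, apply \eqref{eq_Fourier_vs_omega} to $f^{(s)}$, use the monotonicity \eqref{eq_omega_<_0}, and sum the $\zeta$-series. The only minor difference is in deriving \eqref{eq_R_mit_ableitung}: the paper obtains it directly from \eqref{eq_R_mit_modulus} by inserting \eqref{eq_omega_<_1}, where the factor $2^r$ cancels exactly (no limit $r\to\infty$ is needed), whereas you give a separate argument via the trivial $L^1$ bound on the Fourier transform; both are fine and yield the same constant.
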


\begin{proof}
%
Since $f\in W^{s,1}(\R)=W\bkbig{(iv)^s;\Lone}$, we have by \eqref{eq_Fourier_vs_omega},
\[
  \bigabs{\widehat f(v)}=\frac{1}{\abs{v}^s}\bigabs{(iv)^s\widehat f(v)}=\frac{1}{\abs{v}^s}\bigabs{\widehat {f^{(s)}}(v)}
  \le \frac 1{2^r\sqrt{2\pi}}\,\frac{1}{\abs{v}^s}\omega_r\Big(f^{(s)};\frac{\pi}{\abs{v}};L^1(\R)\Big)  \qquad(v\ne 0).
\]
It follows that
\begin{align*}
    \abs{R_\sigma(f)}  \le{}& \sqrt{2\pi} \sum_{k\in\Z\setminus\{0\}}  \bigabs{\wh{f}(k\sigma)}
    \le \frac 1{2^{r-1}}\sum_{k=1}^\un \frac{1}{\p{\sigma k}^s}\omega_{r}\pBig{f^{(s)};\frac{\pi}{\sigma k};L^1(\R)}\\[2ex]
    {}={}&\frac{\zeta(s)}{\pi^s 2^{r+s-1}}\pBig{\frac{2\pi}{\sigma}}^s\omega_{r}\pBig{f^{(s)};\frac{\pi}{\sigma};L^1(\R)}
    = h^s\frac{\zeta(s)}{\pi^s 2^{r+s-1}}\omega_{r}\pBig{f^{(s)};\frac{h}{2};L^1(\R)}.
\end{align*}
This is \eqref{eq_R_mit_modulus}, and \eqref{eq_R_mit_ableitung} follows by applying \eqref{eq_omega_<_1} to this inequality.
\end{proof}

Estimate \eqref{eq_R_mit_ableitung} can also be deduced from the results in \cite{Butzer-Stens_1983b}, but our approach yields a better constant. For $s=2$ we obtain \eqref{eq_main_0}.

\section{Fractional order derivatives}\label{sec_fractional_der}
In this section we are going to generalize estimate \eqref{eq_main_0} to derivatives of arbitrary order. We will consider derivatives of integer order, but also of fractional order $\alpha>1$. For our purpose we have to consider such derivatives for $f\in L^p(\R)$, $p=1$ or $p=2$.

A function $f\in L^p(\R)$, $p=1$ or $p=2$ is said to have a strong (norm) Riesz derivative of fractional order $\alpha$ with $0<\alpha < 2j$, $j\in \N$, if there exists function $g\in L^p(\R)$ such that  (\cite[Chapter~11]{Butzer-Nessel_1971}, \cite{Butzer-Schmeisser-Stens_2015})
\[
%
  \lim_{\eps\to 0+}\Biggnorm{\frac1{C_{\alpha,2j}} \int_\varepsilon^\infty \frac{\overline\Delta^{2j}_u f(\,\cdot\,)}{u^{1+\alpha}}\du-g(\,\cdot\,) }_{\Lp}=0.
\]
Then the strong Riesz derivative is defined by $\Ds\alpha f:=g$.

Above
\begin{equation}\label{eq_difference_central}
  \overline\Delta^{2j}_u f(x):=\sum_{k=0}^{2j}(-1)^k\bin {2j} k f\pbig{x+\p{j-k}u}\qquad(x,u\in\R)
\end{equation}
is the central difference of $f$ of order $2j$ at $x$ with increment $u$ and
\begin{equation}\label{eq_C}
  C_{\alpha,2j}:=(-1)^j2^{2j-\alpha} \int_0^\infty \frac{\sin^{2j}u}{u^{1+\alpha}}\,du.
\end{equation}
It should be noted that the definition of $\Ds\alpha f$ is independent of $j\in\N$. See \cite[Chapter~11]{Butzer-Nessel_1971}, \cite{Butzer-Schmeisser-Stens_2015} for the details.

%
%
%
Now to the characterization of the Riesz derivative in terms of the Fourier transform (cf.~\cite[Theorem~11.2.9]{Butzer-Nessel_1971}, \cite{Butzer-Schmeisser-Stens_2015}).
\begin{proposition}\label{prop_Riesz_existence_equiv}
The following assertions are equivalent for $f\in\Lp$, $p=1,\,2$, and $\alpha>0$:
\begin{aufzaehlung}{$(ii)$}
\item $f$ has a strong Riesz derivative $\Ds{\alpha} f$;
%
%
\item $f\in  \Walphap $, \ie, there exist a function $g\in \Lp$ with $\wh g(v)= |v|^\alpha \wh f(v)$.
\end{aufzaehlung}
In this event, $\wh{\Ds{\alpha}f}(v)=\abs{v}^\alpha\widehat f(v)$. If, in addition, $\abs{v}^\alpha\widehat f(v)\in L^1(\R)$, then
\begin{equation}\label{eq_Riesz_representation}
   \Ds{\alpha}f(t)=\frac{1}{\sqrt{2\pi}}\int_{-\un}^\un\abs{v}^\alpha\widehat f(v) e^{ivt}\dv \qquad(t\in\R).
\end{equation}
\end{proposition}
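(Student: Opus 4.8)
The plan is to establish the equivalence of $(i)$ and $(ii)$ by passing to the Fourier transform side and exploiting the explicit formula for the Fourier transform of the central difference. First I would compute $\wh{\overline\Delta^{2j}_u f}(v)$: from \eqref{eq_difference_central} and the translation rule $\wh{f(\cdot+c)}(v) = e^{icv}\wh f(v)$ one gets
\[
  \wh{\overline\Delta^{2j}_u f}(v) = \Bigl(\sum_{k=0}^{2j}(-1)^k\tbinom{2j}{k} e^{i(j-k)uv}\Bigr)\wh f(v)
  = \bigl(e^{iuv/2}-e^{-iuv/2}\bigr)^{2j}\wh f(v) = (-1)^j\bigl(2\sin(uv/2)\bigr)^{2j}\wh f(v).
\]
Dividing by $u^{1+\alpha}$ and integrating from $\eps$ to $\infty$, the substitution $w = uv/2$ (for $v\neq 0$) turns $\int_\eps^\infty u^{-1-\alpha}(2\sin(uv/2))^{2j}\,du$ into $|v|^\alpha\cdot 2^{2j-\alpha}\int_{\eps|v|/2}^\infty w^{-1-\alpha}(2\sin w)^{2j}\cdot(\text{const})\,dw$; letting $\eps\to0+$ and recalling \eqref{eq_C}, the normalizing factor $1/C_{\alpha,2j}$ is exactly what makes the limit equal to $|v|^\alpha\wh f(v)$. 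Thus, at least formally, the Fourier transform of the truncated singular integral in the definition tends to $|v|^\alpha\wh f(v)$ pointwise.

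Next I would argue the two directions. For $(i)\Rightarrow(ii)$: if the $L^p$-limit $g$ exists, then by continuity of the Fourier transform on $L^1$ (and its isometry property on $L^2$) the Fourier transforms converge in the appropriate sense, and combined with the pointwise computation above this forces $\wh g(v) = |v|^\alpha\wh f(v)$ a.e., which is precisely membership in $\Walphap$; this also gives the identity $\wh{\Ds\alpha f}(v) = |v|^\alpha\wh f(v)$. For $(ii)\Rightarrow(i)$: given $g\in L^p$ with $\wh g(v) = |v|^\alpha\wh f(v)$, one writes the truncated integral's Fourier transform as $m_\eps(v)\wh g(v)$ where $m_\eps(v) = C_{\alpha,2j}^{-1}|v|^{-\alpha}\int_\eps^\infty u^{-1-\alpha}(-1)^j(2\sin(uv/2))^{2j}\,du$, checks that $m_\eps$ is uniformly bounded in $\eps$ and $v$ and tends to $1$ as $\eps\to0+$, and then invokes a dominated-convergence / Fourier-multiplier argument to conclude that the truncated integrals converge to $g$ in $L^p$-norm. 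For the final representation formula \eqref{eq_Riesz_representation}, the extra hypothesis $|v|^\alpha\wh f(v)\in L^1(\R)$ means $\wh g\in L^1$, so Fourier inversion applies to $g = \Ds\alpha f$ and yields the stated integral.

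For the technical backbone I would lean on the results already cited — \cite[Chapter~11]{Butzer-Nessel_1971} and \cite{Butzer-Schmeisser-Stens_2015} — which contain the $j$-independence of the definition and the multiplier estimates; the proof here is really a matter of assembling these. The main obstacle is the $(ii)\Rightarrow(i)$ direction: one needs the convergence $m_\eps\to1$ together with a uniform bound $|m_\eps(v)|\le M$ to be strong enough to transfer to $L^p$-convergence of $m_\eps\wh g$ back to $g$. For $p=2$ this is immediate from Plancherel and dominated convergence on the Fourier side; for $p=1$ one cannot argue on the Fourier side directly, so one instead represents the truncated integral as a convolution of $g$ with an $L^1$-kernel (an approximate identity in disguise, built from the function whose Fourier transform is $m_\eps$) and applies the standard approximation-theorem machinery. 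Verifying that this kernel is indeed in $L^1$ with uniformly bounded norm and forms an approximate identity is the one place where real work beyond bookkeeping is needed; everything else follows the computations sketched above.
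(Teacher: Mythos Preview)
The paper does not actually prove this proposition: it is stated with the parenthetical citation ``(cf.~\cite[Theorem~11.2.9]{Butzer-Nessel_1971}, \cite{Butzer-Schmeisser-Stens_2015})'' and then used without further argument. Your sketch is precisely the standard route taken in those references---computing the Fourier transform of the central difference, recognising the multiplier $m_\eps(v)\to 1$, and then handling $p=2$ via Plancherel and $p=1$ via an approximate-identity / convolution-kernel argument---so there is nothing to compare beyond noting that you have reconstructed what the cited sources do.
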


When $\alpha = r$ with $r\in \N$, the Riesz derivative can be expressed in terms of the ordinary derivative $f^{(r)}$ or Riemann derivative $D^{\bk{r}}\kern-2pt f$, because
%

\begin{equation}\label{eq_Riesz_vs_Riemann}
   \Ds r f(x)=
  \begin{cases}
    (-1)^m f^{(2m)}(x)= (-1)^m D_s^{\bk{2m}}\kern-2pt f(x), & r=2m,\\[2ex]
    (-1)^{m-1}{\widetilde f}^{(2m-1)}(x)=(-1)^{m-1}D_s^{\bk{2m-1}}\kern-2pt \widetilde f(x),& r=2m-1,
  \end{cases}
\end{equation}
where $\widetilde f$ denotes the conjugate function or Hilbert transform of $f$. See \cite[Section~3.3]{Butzer-Schmeisser-Stens_2015} for the details.

Theorem~\ref{thm_R_vs_omega_s} can be easily extended to fractional order derivatives. The proof follows by exactly the same arguments. One has only to note that $\Walphaone \subset W^{1,1}(\R)\subset \A$; see \cite[p.~416]{Butzer-Nessel_1971}, \cite{Butzer-Stens_1983b}.

\begin{theorem}\label{thm_R_vs_omega_alpha}
Let $\alpha>1$. If $f\in \Walphaone\cap\CR$, \ie, $f$ possesses a strong Riesz derivative $\Ds{\alpha}f\in \Lone$, then for each $r\in\N$ and $h = 2\pi/\sigma$,
\begin{equation}\label{eq_R_mit_modulus_alpha}
  \bigabs{R_\sigma(f)}\le h^{\alpha}\frac{\zeta(\alpha)}{\pi^\alpha 2^{r+\alpha-1}}\omega_r\pBig{\Ds{\alpha}f;\frac h2;L^1(\R)}\qquad (h>0).
\end{equation}
Furthermore, one has
\begin{equation}\label{eq_R_mit_ableitung_alpha}
  \bigabs{R_\sigma(f)}\le h^{\alpha}\frac{\zeta(\alpha)}{\pi^\alpha 2^{\alpha-1}}\bignorm{\Ds{\alpha}f}_{\Lone}\qquad (h>0).
\end{equation}
%
\end{theorem}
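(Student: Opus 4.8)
The plan is to mimic the proof of Theorem~\ref{thm_R_vs_omega_s} line by line, replacing the integer-order derivative $f^{(s)}$ by the strong Riesz derivative $\Ds{\alpha}f$ and the Sobolev characterisation \eqref{eq_Sobolev_equivalenz} by the Fourier-transform characterisation of $\Walphaone$ given in Proposition~\ref{prop_Riesz_existence_equiv}. First I would record that, since $\alpha>1$, the inclusion $\Walphaone\subset W^{1,1}(\R)\subset\A$ noted just above the statement guarantees that $f\in\A$, so that the quadrature formula \eqref{eq_qudrature_formula}--\eqref{eq_restglied_summe} holds and $R_\sigma(f)$ is well defined.

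Next, the key identity: by Proposition~\ref{prop_Riesz_existence_equiv} we have $\wh{\Ds{\alpha}f}(v)=|v|^\alpha\wh f(v)$ for a.e.\ $v$, hence for $v\ne 0$
\[
  \bigabs{\wh f(v)}=\frac{1}{|v|^\alpha}\bigabs{\wh{\Ds{\alpha}f}(v)}
  \le \frac{1}{2^r\sqrt{2\pi}}\,\frac{1}{|v|^\alpha}\,\omega_r\pBig{\Ds{\alpha}f;\frac{\pi}{|v|};L^1(\R)},
\]
where the inequality is \eqref{eq_Fourier_vs_omega} applied to the $L^1$-function $\Ds{\alpha}f$. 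Inserting this bound into $\abs{R_\sigma(f)}\le\sqrt{2\pi}\sum_{k\in\Z\setminus\{0\}}\bigabs{\wh f(k\sigma)}$ and using the monotonicity \eqref{eq_omega_<_0} of $\omega_r$ to replace each $\omega_r(\Ds{\alpha}f;\pi/(\sigma k);L^1(\R))$ by $\omega_r(\Ds{\alpha}f;\pi/\sigma;L^1(\R))$, the sum over $k$ factors as $\sum_{k=1}^\infty k^{-\alpha}=\zeta(\alpha)$, which converges precisely because $\alpha>1$. Collecting constants and writing $h=2\pi/\sigma$, i.e.\ $\pi/\sigma=h/2$ and $(2\pi/\sigma)^\alpha=h^\alpha$, yields \eqref{eq_R_mit_modulus_alpha}; then \eqref{eq_R_mit_ableitung_alpha} follows by applying \eqref{eq_omega_<_1} to bound $\omega_r(\Ds{\alpha}f;h/2;L^1(\R))\le 2^r\norm{\Ds{\alpha}f}_{\Lone}$.

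The only genuine point requiring care — the "main obstacle," though a mild one — is the legitimacy of invoking \eqref{eq_Fourier_vs_omega} for $\Ds{\alpha}f$ and of manipulating $\bigabs{\wh f(v)}=|v|^{-\alpha}\bigabs{\wh{\Ds{\alpha}f}(v)}$ pointwise at the nodes $v=k\sigma$. The first is fine since $\Ds{\alpha}f\in\Lone$ by hypothesis, so \eqref{eq_Fourier_vs_omega} applies verbatim; the second is fine because $f\in\A$ forces $\wh f$ (and $\wh{\Ds{\alpha}f}$, being the Fourier transform of an $L^1$-function) to be continuous, so the a.e.\ identity $\wh{\Ds{\alpha}f}(v)=|v|^\alpha\wh f(v)$ holds at every $v\ne 0$, in particular at $v=k\sigma$. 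Everything else is the same bookkeeping as in Theorem~\ref{thm_R_vs_omega_s}, which is why the statement can simply assert that "the proof follows by exactly the same arguments."
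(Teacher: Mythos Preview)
Your proposal is correct and follows exactly the approach the paper indicates: replace $f^{(s)}$ by $\Ds{\alpha}f$ in the proof of Theorem~\ref{thm_R_vs_omega_s}, use the Fourier characterisation from Proposition~\ref{prop_Riesz_existence_equiv} in place of \eqref{eq_Sobolev_equivalenz}, and invoke the inclusion $\Walphaone\subset W^{1,1}(\R)\subset\A$ to ensure $f\in\A$. The care you take with the pointwise validity of $\wh{\Ds{\alpha}f}(v)=|v|^\alpha\wh f(v)$ at the nodes is appropriate and more explicit than the paper, which simply asserts that the same arguments go through.
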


\section{Distance of functions from $B^1_\sigma$}
Let $G$ be the vector space of all functions $f\in \Ltwo$ having their Fourier transform in $\CRb$.
%
On the space $G$ we define a norm by
\[
       \PN{f}_\un := \bignorm{\wh f\,}_{\CR} = \sup_{v\in\R} \bigabs{\wh f(v)},
\]
which induces the metric
\begin{equation*}
\dist_\un(f,g):= \PN{f-g}_\un.
\end{equation*}

We now determine the distance of a function $f\in G$ from the Bernstein space $B_\sigma^1$, defined
by
\[
 \dist_\un(f, B_\sigma^1) := \inf_{g\in B_\sigma^1} \PN{f-g}_\un.
\]
\begin{proposition}\label{prop_prop4.1_neu}
a) Let $f\in G$. Then,
\[
 \dist_\un(f, B_\sigma^1)= \sup_{|v|\ge \sigma}\bigabs{\wh f(v)}.
 \]
%
%
b) Let $f\in \Ltwo$ with $v^s\wh f(v)\in \Ltwo\cap \CR$ for some $s\in \N$, then $f$ has a derivative of order $s$ in  $\Ltwo$, and
\[
 \dist_\un(f^{(s)}, B_\sigma^1)= \sup_{|v|\ge \sigma} \absbig{v^s\wh f(v)}.
 \]
\noindent
c) Let $f\in\Ltwo$ with $\abs{v}^\alpha\wh f(v)\in \Ltwo\cap \CR$ for some $\alpha>0$. Then $f$ has a Riesz derivative $\Ds \alpha f$  in  $\Ltwo$, and
\[
 \dist_\un(\Ds \alpha f, B_\sigma^1)= \sup_{|v|\ge \sigma} \abs{v}^\alpha\absbig{\wh f(v)}.
 \]
%
\end{proposition}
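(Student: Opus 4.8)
The key structural fact is that the Fourier transform is an isometry of $\Ltwo$ onto itself, so that membership in $B_\sigma^1$ is equivalent, on the Fourier side, to having a transform supported in $[-\sigma,\sigma]$. Part~a) is therefore the heart of the matter, and parts~b) and~c) are corollaries obtained by first identifying the transform of the relevant derivative and then invoking~a).

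\emph{Part a).} Write $M:=\sup_{|v|\ge\sigma}|\wh f(v)|$; this is finite since $\wh f\in\CRb$. For the lower bound $\dist_\un(f,B_\sigma^1)\ge M$: given any $g\in B_\sigma^1$, its transform $\wh g$ vanishes for $|v|\ge\sigma$, hence for every such $v$ we have $|\wh f(v)-\wh g(v)|=|\wh f(v)|$, so $\PN{f-g}_\un=\sup_{w\in\R}|\wh f(w)-\wh g(w)|\ge\sup_{|v|\ge\sigma}|\wh f(v)|=M$; taking the infimum over $g$ gives $\dist_\un(f,B_\sigma^1)\ge M$. For the upper bound I would construct, for each $\eps>0$, an element $g_\eps\in B_\sigma^1$ with $\PN{f-g_\eps}_\un\le M+\eps$. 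The natural choice is to truncate on the Fourier side: take $\wh{g_\eps}$ to be $\wh f$ multiplied by a smooth cutoff equal to $1$ on $[-\sigma+\delta,\sigma-\delta]$ and supported in $(-\sigma,\sigma)$, with $\delta$ small. One must check (i) that such a $g_\eps$ lies in $B_\sigma^1$, i.e.\ that $\wh{g_\eps}\in\Lone$ and the resulting $g_\eps$ is in $\Lone$ — this uses that $\wh f\in\Ltwo\cap\CRb$, so $\wh{g_\eps}$ has compact support and is bounded, hence in $\Lone$, and then $g_\eps$ as its inverse transform lies in $\Lone$ because $\wh{g_\eps}\in\Lone$ — and (ii) that $\sup_v|\wh f(v)-\wh{g_\eps}(v)|\le M+\eps$: on $[-\sigma+\delta,\sigma-\delta]$ the difference is $0$, for $|v|\ge\sigma$ it equals $|\wh f(v)|\le M$, and on the transition annulus $\sigma-\delta\le|v|\le\sigma$ it is bounded by $|\wh f(v)|\le\sup_{|v|\ge\sigma-\delta}|\wh f(v)|$, which by uniform continuity of $\wh f$ can be made $\le M+\eps$ by choosing $\delta$ small. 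Letting $\eps\to0+$ yields $\dist_\un(f,B_\sigma^1)\le M$.

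\emph{Parts b) and c).} For b), the hypothesis $v^s\wh f(v)\in\Ltwo$ together with the Fourier characterization of Sobolev spaces (the $p=2$ case of~\eqref{eq_Sobolev_equivalenz}) shows $f\in W^{s,2}(\R)$, so $f^{(s)}\in\Ltwo$ with $\wh{f^{(s)}}(v)=(iv)^s\wh f(v)$; since $|(iv)^s\wh f(v)|=|v^s\wh f(v)|$ and the latter is assumed in $\CR$, we have $f^{(s)}\in G$, and applying~a) to $f^{(s)}$ gives $\dist_\un(f^{(s)},B_\sigma^1)=\sup_{|v|\ge\sigma}|(iv)^s\wh f(v)|=\sup_{|v|\ge\sigma}|v^s\wh f(v)|$. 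For c), the hypothesis $|v|^\alpha\wh f(v)\in\Ltwo$ is exactly condition $(ii)$ of Proposition~\ref{prop_Riesz_existence_equiv} in the case $p=2$, so $f$ has a strong Riesz derivative $\Ds\alpha f\in\Ltwo$ with $\wh{\Ds\alpha f}(v)=|v|^\alpha\wh f(v)$; since this transform is assumed to be in $\CR$, we have $\Ds\alpha f\in G$, and~a) applied to $\Ds\alpha f$ yields $\dist_\un(\Ds\alpha f,B_\sigma^1)=\sup_{|v|\ge\sigma}|v|^\alpha|\wh f(v)|$.

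\emph{Main obstacle.} The only genuinely nontrivial point is the upper bound in part~a): verifying that the Fourier-side truncation $g_\eps$ is legitimately an element of $B_\sigma^1$ (i.e.\ that it belongs to $\Lone$, not merely to $\Ltwo$). A bald hard truncation $\wh f\cdot\mathbf 1_{[-\sigma,\sigma]}$ would have compact support and be in $\Lone$, but its inverse transform need not lie in $\Lone$ in general; a smooth cutoff makes $g_\eps$ decay and fixes this, at the cost of the transition-annulus estimate, which is handled by the uniform continuity of $\wh f$. Everything else is a direct application of the isometry property and the Fourier characterizations already recorded in the paper.
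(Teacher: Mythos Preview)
Your lower bound in part~a) and the reductions in parts~b) and~c) are correct and match the paper. You also correctly identify the crux of the upper bound in~a): getting the approximant into $\Lone$, not merely $\Ltwo$. But your resolution fails. The line ``$g_\eps$ lies in $\Lone$ because $\wh{g_\eps}\in\Lone$'' is simply wrong --- $\wh{g_\eps}\in\Lone$ only gives $g_\eps\in\CRn$. Smoothness of the cutoff does not repair this either, because the obstruction can come from $\wh f$ itself rather than from the edge of the cutoff: if $h$ is any continuous function supported in $[-\sigma+\delta,\sigma-\delta]$, then $h=\wh f$ for some $f\in G$, your cutoff leaves $h$ untouched, and your claim would force the inverse transform of every such $h$ into $\Lone$ --- equivalently, every continuous compactly supported function would lie in the Wiener algebra $\mathcal F(\Lone)$, which is false.

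The paper's device is to force the approximant into $\Lone$ on the \emph{time} side. One first sets $f_t(x):=f(x)\,e^{-tx^2}$, which lies in $\Lone$ by Cauchy--Schwarz since both factors are in $\Ltwo$; on the Fourier side this is the Gauss--Weierstrass mollification $\wh f_t=\wh f*w_t$, which tends to $\wh f$ uniformly on compact sets as $t\to0+$. One then convolves $f_t$ with an $\Lone$ kernel $\chi_\eta$ whose transform is a trapezoidal cutoff supported in $[-\sigma,\sigma]$ and equal to~$1$ on $[-\sigma+\eta,\sigma-\eta]$; since $\Lone*\Lone\subset\Lone$, the result $g_{t,\eta}:=f_t*\chi_\eta$ lies in $B_\sigma^1$. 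The sup-norm estimate of $\wh f-\wh g_{t,\eta}$ then splits exactly as you propose, with one additional term $\sup_{|v|\le\sigma}\bigl|\wh f(v)-\wh f_t(v)\bigr|$ made small by choosing $t$ small.
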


\begin{proof} Clearly,
\begin{equation*}
\dist_\un(f, B_\sigma^1)
   = \inf_{g\in B_\sigma^1}\brbigg{\sup_{v\in\R}\bigabs{\wh f(v)-\wh{g}(v)}} \ge \inf_{g\in B_\sigma^1}\brbigg{\sup_{\abs v\ge \sigma}\bigabs{\wh f(v)-\wh{g}(v)}}
   = \sup_{\abs v\ge \sigma}\bigabs{\wh f(v)},
\end{equation*}
the latter equality holding in view of $\wh g(v)=0$ for $\abs v >\sigma$.

Conversely, for $t>0$ let $w_t(x):= (2t)^{-1/2}\exp(-x^2/4t)$, $x\in\R$ be the kernel of Gauß-Weierstraß having the Fourier transform $\wh w_t(v):= \exp(-tv^2)$, $v\in\R$. Now define
\[
    f_t(x):=\frac1{\sqrt{2\pi}}\int_{-\infty}^\infty(\wh f*w_t)(v)e^{ivx}\,dv= f(x)\wh w_t(x) \qquad(x\in\R),
\]
where $*$ denotes the Fourier convolution.
Since $f$ and $\wh w_t$ both belong to $\Ltwo$, it follows by Hölder's inequality that $f_t\in L^1(\R)$, and with aid of the convolution theorem we deduce  $\wh f_t=\wh f*w_t$;  see \cite[pp.~4, 125, 189]{Butzer-Nessel_1971}.

Next, for $0<\eta<\sigma-\eta$ let
\[
 \chi_\eta(x):=\sqrt{\frac2\pi}(\sigma-\eta)\sinc\Big(\frac{\sigma-\eta}{\pi}x\Big)\sinc\Big(\frac{\eta}{\pi}x\Big)
 \qquad(x\in\R),
\]
the Fourier transform of which is given by
\[
  \wh\chi_\eta(v)=
  \begin{cases}
  1, & |v|\le \sigma-\eta,\\
  0, &|v|\ge \sigma,\\
  \eta^{-1}(v+\sigma),\quad & -\sigma<v<-\sigma+\eta,\\
  \eta^{-1}(\sigma-v),\quad & \sigma-\eta <v<\sigma.
  \end{cases}
\]
In particular, there holds $0\le \wh\chi_\eta(v)\le 1$ for all $v\in\R$.

Setting now $g_{t,\eta}:=f_t*\chi_\eta$, then $g_{t,\eta}\in L^1(\R)$ with $\wh g_{t,\eta}=\wh f_t\cdot\wh\chi_\eta$, showing that $g_{t,\eta}\in B^1_\sigma$.
It follows that
\begin{align*}
 \dist_\un(f,B^1_\sigma) & {}\le \max\brbigg{\sup_{\abs{v}\le\sigma-\eta} \bigabs{\wh f(v)-\wh{g}_{t,\eta}(v)},\sup_{\abs{v}\ge\sigma-\eta} \bigabs{\wh f(v)-\wh{g}_{t,\eta}(v)}} =: \max\br{s_1,s_2},
\end{align*}
say. Since $\wh g_{t,\eta}(v)=\wh f_t(v)\cdot\wh\chi_\eta(v)=\wh f_t(v)$ for $\abs v\le \sigma-\eta$,  we have
\begin{equation*}
  s_1 = \sup_{\abs{v}\le\sigma-\eta} \bigabs{\wh f(v)-\wh f_t(v)},
\end{equation*}
and, concerning $s_2$, there holds
\begin{align*}
  s_2\le{}& \sup_{\abs v\ge \sigma-\eta}\Bigabs{\wh f(v)\bkbig{1-\wh\chi_\eta(v)}}+\sup_{\abs v\ge \sigma-\eta}\Bigabs{\bkbig{\wh f(v)-\wh f_t(v)}\wh\chi_\eta(v)}\\[2ex]
 {}\le{}& \sup_{\abs v\ge \sigma-\eta}\bigabs{\wh f(v)}+\sup_{\sigma-\eta\le \abs v\le \sigma}\bigabs{\wh f(v)-\wh f_t(v)}=:s_{2,1}+s_{2,2}.
\end{align*}
%

Now, let $\eps>0$ be arbitrary. Since $\wh f $ is bounded and continuous, there exists $t>0$ such that $s_1$ and $s_{2,2}$ become smaller than $\eps/2$ (cf.~\cite[Cor.~3.1.13 and Problem~3.1.1\,(iv)]{Butzer-Nessel_1971}). Furthermore, one can choose $\eta > 0$ so small that $s_{2,1}\le \sup_{\abs v\ge \sigma}\bigabs{\wh f(v)} +\eps/2$.

It follows that
\[
  \dist_\un(f,B^1_\sigma)\le \max\br{s_1,s_{2,1}+s_{2,2}}\le \sup_{\abs v\ge \sigma}\bigabs{\wh f(v)}+ \eps.
\]
This completes the proof of part~a).

Since the proofs of parts~b) and c) follow by exactly the same arguments, we confine ourselves to the latter one.

Since the Fourier transform is a surjection on $\Ltwo$, the assumptions imply $f\in \Walphatwo$. Now we have by Proposition~\ref{prop_Riesz_existence_equiv} that the derivative $\Ds \alpha f$ exists (as an element of $\Ltwo$) and there holds $\wh{\Ds{\alpha}f}(v)=\abs{v}^\alpha\widehat f(v)\in \CRb$. Hence $\Ds \alpha f\in G$, and we can apply part~a) with $f$ replaced by $\Ds\alpha f$, and $\wh f(v)$ by $\wh{\Ds\alpha f}(v)=\abs{v}^\alpha\wh f(v)$.
\end{proof}

\section{Estimates of the remainder in terms of the distance functional}

Using the concept of the foregoing sections, we can prove the following result.
\begin{theorem}\label{thm_main_0}
a) Let $f\in \A$ with $v^s \wh{f}(v) \in L^2(\R)\cap \CRb$ for some $s\in\N$, $ s\ge 2$. Then $f^{(s)}$ exists as an element of $\Ltwo$, and one has for $h=2\pi/\sigma$,
\begin{equation}\label{eq_main_1}
  \abs{R_\sigma(f)} \le h^s\frac{2\zeta(s)}{(2\pi)^{s-1/2}}\dist_\infty(f^{(s)}, B_\sigma^1)\qquad(\sigma>0).
%
\end{equation}
b) Let $\alpha >1$, $f\in \A$ and $\abs{v}^\alpha \wh{f}(v) \in L^2(\R)\cap \CRb$. Then $\Ds{\alpha}f$ exists as an element of $\Ltwo$, and one has for $h=2\pi/\sigma$,
\begin{equation}\label{eq_main_1_alpha}
  \abs{R_\sigma(f)} \le h^\alpha\frac{2\zeta(\alpha)}{(2\pi)^{\alpha-1/2}}\dist_\infty(\Ds{\alpha}f, B_\sigma^1)\qquad(\sigma>0).
%
\end{equation}
%
%
\end{theorem}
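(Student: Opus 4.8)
The plan is to treat \eqref{eq_main_1_alpha} as the ``distance-functional'' analogue of Theorems~\ref{thm_R_vs_omega_s} and~\ref{thm_R_vs_omega_alpha}: I would factor each Fourier coefficient as $\wh f(k\sigma)=\abs{k\sigma}^{-\alpha}\cdot\abs{k\sigma}^{\alpha}\bigabs{\wh f(k\sigma)}$, bound the second factor by $\sup_{\abs{v}\ge\sigma}\abs{v}^{\alpha}\bigabs{\wh f(v)}$ --- which by Proposition~\ref{prop_prop4.1_neu} is exactly the distance in question --- and sum the first factors against a zeta series. I would write out part~b) in detail; part~a) is identical with $\alpha$ replaced by the integer $s\ge2$ and Proposition~\ref{prop_prop4.1_neu}c) replaced by Proposition~\ref{prop_prop4.1_neu}b).

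First I would check that the hypotheses place $f$ in the spaces needed. Since $f\in\A\subset\Lone$, the transform $\wh f$ is continuous and bounded; together with $\abs{v}^{\alpha}\wh f(v)\in\Ltwo$ --- boundedness of $\wh f$ takes care of $\abs{v}\le1$ and $\bigabs{\wh f(v)}=\abs{v}^{-\alpha}\bigabs{\abs{v}^{\alpha}\wh f(v)}\le\bigabs{\abs{v}^{\alpha}\wh f(v)}$ takes care of $\abs{v}\ge1$ --- this gives $\wh f\in\Ltwo$, hence $f\in\Ltwo$ as well. Now $f\in\Ltwo$ with $\abs{v}^{\alpha}\wh f(v)\in\Ltwo\cap\CR$, so Proposition~\ref{prop_prop4.1_neu}c) applies and yields both that $\Ds{\alpha}f$ exists as an element of $\Ltwo$ and that
\[
 \dist_\infty(\Ds{\alpha}f,B_\sigma^1)=\sup_{\abs{v}\ge\sigma}\abs{v}^{\alpha}\bigabs{\wh f(v)}=:D_\sigma ,
\]
a finite number because $\abs{v}^{\alpha}\wh f(v)\in\CRb$.

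Next I would run the estimate. The representation \eqref{eq_restglied_summe} holds since $f\in\A$, so
\[
 \bigabs{R_\sigma(f)}\le\sqrt{2\pi}\sideset{}{'}\sum_{k\in\Zprime}\bigabs{\wh f(k\sigma)}
 =\sqrt{2\pi}\sideset{}{'}\sum_{k\in\Zprime}\frac{\abs{k\sigma}^{\alpha}\bigabs{\wh f(k\sigma)}}{\abs{k\sigma}^{\alpha}} .
\]
Since $\abs{k\sigma}\ge\sigma$ for every $k\in\Zprime$, the numerators are all $\le D_\sigma$; pulling $D_\sigma$ out and summing the remaining series, which converges because $\alpha>1$, gives
\[
 \bigabs{R_\sigma(f)}\le\sqrt{2\pi}\,D_\sigma\,\frac{1}{\sigma^{\alpha}}\sideset{}{'}\sum_{k\in\Zprime}\frac{1}{\abs{k}^{\alpha}}
 =\sqrt{2\pi}\,\frac{2\zeta(\alpha)}{\sigma^{\alpha}}\,D_\sigma .
\]
Finally I would substitute $\sigma=2\pi/h$, so that $\sigma^{-\alpha}=h^{\alpha}(2\pi)^{-\alpha}$ and the constant becomes $\sqrt{2\pi}\cdot2\zeta(\alpha)(2\pi)^{-\alpha}=2\zeta(\alpha)(2\pi)^{-(\alpha-1/2)}$; this is precisely \eqref{eq_main_1_alpha}. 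For part~a) the same chain, with $\alpha$ replaced by $s\ge2$ and Proposition~\ref{prop_prop4.1_neu}b) used instead of~c), first produces $f^{(s)}\in\Ltwo$ with $\dist_\infty(f^{(s)},B_\sigma^1)=\sup_{\abs{v}\ge\sigma}\bigabs{v^s\wh f(v)}$ and then delivers \eqref{eq_main_1}.

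The main obstacle is not really an obstacle: the estimate is just the factorisation above followed by a zeta-series sum. The two points that need a little care are, first, confirming that $f\in\A$ together with $\abs{v}^{\alpha}\wh f(v)\in\Ltwo\cap\CRb$ indeed forces $f\in\Ltwo$, so that Proposition~\ref{prop_prop4.1_neu} --- and with it the existence of $\Ds{\alpha}f$ (resp.\ $f^{(s)}$) in $\Ltwo$ and the distance identity --- is legitimately available; and second, making sure the ``high-frequency'' quantities $\abs{k\sigma}^{\alpha}\bigabs{\wh f(k\sigma)}$ are uniformly bounded (this is where continuity and boundedness of $\abs{v}^{\alpha}\wh f(v)$ enter) and that $\sum_{k\ge1}k^{-\alpha}<\infty$, \ie, that $\alpha>1$ (resp.\ $s\ge2$) --- which is exactly what is assumed.
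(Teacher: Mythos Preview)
Your proof is correct and follows essentially the same route as the paper: factor $\bigabs{\wh f(k\sigma)}=\abs{k\sigma}^{-\alpha}\cdot\abs{k\sigma}^{\alpha}\bigabs{\wh f(k\sigma)}$, bound the second factor by $\sup_{\abs{v}\ge\sigma}\abs{v}^{\alpha}\bigabs{\wh f(v)}$, sum to obtain the $\zeta(\alpha)$ factor, and invoke Proposition~\ref{prop_prop4.1_neu} for both the existence of the derivative and the identification of the supremum with the distance. The only cosmetic difference is that the paper verifies $f\in\Ltwo$ directly from $f\in\A\subset\Lone\cap\CRb\subset\Ltwo$, whereas you argue via $\wh f\in\Ltwo$; both are valid.
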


\begin{proof} a) First we note that $f\in\mathcal A$ implies $f\in L^1(\R)\cap \CRb\subset \Ltwo$ and we also have $\abs{v}^\alpha \wh{f}(v) \in \Ltwo$. Hence the derivative $f^{(s)}$ exists as an element of $\Ltwo$ by Proposition~\ref{prop_prop4.1_neu}\,b). It follows that
%
%
\begin{align*}
\abs{R_\sigma(f)} \le {} & \sqrt{2\pi}\,\sum_{k\in\Z\setminus\{0\}} \abs{\wh{f}(k\sigma)}
      = \sqrt{2\pi}\,\sideset{}{'}\sum_{k\in\Zprime} \frac{\abs{k\sigma}^s \abs{\wh{f}(k\sigma)}}{\abs{k\sigma}^s}\\[2ex]
%
%
{}\le{} & \sqrt{2\pi} \sup_{\abs{v}\ge\sigma} \brBig{\abs{v}^s \absbig{\wh{f}(v)}}\,\frac{2}{\sigma^s} \,
\sum_{k=1}^\infty \rez{k^s} = 2\sqrt{2\pi}\, \frac{\zeta(s)}{\sigma^s}\, \sup_{\abs{v}\ge\sigma} \brBig{\abs{v}^s \absbig{\wh{f}(v)}}\,.
\end{align*}

Applying Proposition~\ref{prop_prop4.1_neu}\,b) once more, we obtain \eqref{eq_main_1}. Part~b) follows along the same lines using Proposition~\ref{prop_prop4.1_neu}\,c).
\end{proof}

\section{An equivalence theorem by means of the Möbius inversion}\label{sec_Moebius}

Split $f$ as $f= f\ev + f\od $, where
\[
  \fe(t) := \frac{f(t)+f(-t)}{2} \quad \text{ and } \quad \fo(t) := \frac{f(t) - f(-t)}{2}
\]
are the even and the odd part of $f$, respectively.

Clearly, $R_\sigma(f\od)=0$ and so $R_\sigma(f)=R_\sigma(f\ev)$. It is known that $\wh{\fe}(v)= \wh{\fe}(-v)$, and hence
\begin{equation}\label{eq_remainder_f_even}
  R_\sigma(\fe) = -2\sqrt{2\pi}\, \sum_{k=1}^\infty \wh{\fe}(k\sigma).
\end{equation}
Lyness \cite{Lyness_1970}, Brass \cite{Brass_1978} and Loxton-Sanders \cite{Loxton-Sanders_1979} were the first to use (independently) the Möbius inversion formula for deducing properties of a function $f$ from its remainders in the trapezoidal rule. We shall now also employ that technique.

The Möbius function $\mu\,:\, \N \longrightarrow \{-1, 0, 1\}$ is defined by
\[
   \mu(k)=
   \begin{cases}
      1, &  k=1,\\
 (-1)^n, &  k=p_1 \cdots p_n \hbox{ with distinct primes } p_1, \dots, p_n,\\
      0, &  k \text{ is divisible by a square of a prime}.
   \end{cases}
\]

From \cite[p.~19]{Loxton-Sanders_1980} we cite:
\begin{corollary}\label{cor_Loxton}
The following assertions are equivalent:
\begin{aufzaehlung}{$(ii)$}
\item $\ds g(n)=\sum_{m=1}^\un f(mn)$, $n\in\N$,\\[1ex]
with\\[1.5ex]
$\ds\sum_{n=1}^\un n^\eps\abs{f(n)}<\un$ for some $\eps>0$;
\item $\ds f(n)=\sum_{m=1}^\un \mu(m)g(mn)$, $n\in\N$,\\[1ex]
with\\[1.5ex]
$\ds\sum_{n=1}^\un n^\eps\abs{g(n)}<\un$ for some $\eps>0$.
\end{aufzaehlung}
\end{corollary}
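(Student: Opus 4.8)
The plan is to establish the two implications by the classical Möbius inversion identity together with an absolute-convergence argument that legitimizes interchanging the order of summation in the relevant double series. The two ingredients I shall use are: first, $\sum_{m\mid N}\mu(m)$ equals $1$ for $N=1$ and $0$ for $N>1$; second, the divisor bound $d(N):=\sum_{m\mid N}1=O_\delta(N^\delta)$ for every $\delta>0$, which is what lets the weighted summability hypothesis be transferred (with a slightly smaller exponent) from one of the two sequences to the other.

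For the implication $(i)\Rightarrow(ii)$, assume $g(n)=\sum_{m\ge1}f(mn)$ with $\sum_{n\ge1}n^\eps\abs{f(n)}<\infty$; since $\abs{f(n)}\le n^\eps\abs{f(n)}$ for $n\ge1$, each defining series converges absolutely. I would first check the summability condition on $g$: putting $\eps':=\eps/2$ and using $\abs{g(n)}\le\sum_{m\ge1}\abs{f(mn)}$, a rearrangement of the series of nonnegative terms (reindexing by $N:=mn$, so $n\mid N$ and $m=N/n$) gives
\[
 \sum_{n\ge1} n^{\eps'}\abs{g(n)}\le\sum_{n\ge1}\sum_{m\ge1} n^{\eps'}\abs{f(mn)}=\sum_{N\ge1}\abs{f(N)}\sum_{n\mid N}n^{\eps'}\le\sum_{N\ge1}\abs{f(N)}\,d(N)\,N^{\eps'}\le C_{\eps/2}\sum_{N\ge1}N^{\eps}\abs{f(N)}<\infty,
\]
where the last inequality uses the divisor bound with $\delta=\eps/2$. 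The same estimate shows that $\sum_{m,k}\abs{\mu(m)f(kmn)}<\infty$, so for each fixed $n$ I may interchange summations and reindex by $N:=km$:
\[
 \sum_{m\ge1}\mu(m)g(mn)=\sum_{m\ge1}\mu(m)\sum_{k\ge1}f(kmn)=\sum_{N\ge1}\Bigl(\sum_{m\mid N}\mu(m)\Bigr)f(Nn)=f(n),
\]
the last step by the Möbius identity. This is exactly the representation in $(ii)$, and the summability of $g$ was verified above.

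The converse $(ii)\Rightarrow(i)$ is symmetric: from $f(n)=\sum_{m\ge1}\mu(m)g(mn)$ and $\sum_{n\ge1}n^\eps\abs{g(n)}<\infty$ one has $\abs{f(n)}\le\sum_{m\ge1}\abs{g(mn)}$, the identical divisor-bound estimate yields $\sum_{n\ge1}n^{\eps/2}\abs{f(n)}<\infty$ together with the absolute convergence needed below, and then
\[
 \sum_{m\ge1}f(mn)=\sum_{m\ge1}\sum_{k\ge1}\mu(k)g(kmn)=\sum_{N\ge1}\Bigl(\sum_{k\mid N}\mu(k)\Bigr)g(Nn)=g(n).
\]
I expect the only genuine obstacle to be the bookkeeping that makes these interchanges rigorous — exhibiting the absolutely convergent majorant and invoking $d(N)=O_\delta(N^\delta)$ to carry the weighted $\ell^1$ hypothesis across — while the algebraic core of the statement is just the standard Möbius inversion identity.
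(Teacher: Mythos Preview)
Your argument is correct: the divisor bound $d(N)=O_\delta(N^\delta)$ is exactly what makes the weighted $\ell^1$ condition transfer from $f$ to $g$ (and back) with a halved exponent, and it simultaneously furnishes the absolutely convergent majorant that justifies the Fubini-type interchange leading to $\sum_{m\mid N}\mu(m)=\llbracket N=1\rrbracket$. One small point of presentation: when you write ``the same estimate shows that $\sum_{m,k}\abs{\mu(m)f(kmn)}<\infty$'', the estimate is not literally the same (here $n$ is fixed and you reindex by $N=km$, obtaining $\sum_N d(N)\abs{f(Nn)}\le C\sum_N N^\eps\abs{f(Nn)}\le C\sum_M M^\eps\abs{f(M)}$), but this is a cosmetic matter.

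As for comparison with the paper: there is nothing to compare. The paper does not prove this corollary; it quotes it verbatim from Loxton--Sanders \cite[p.~19]{Loxton-Sanders_1980} and uses it as a black box to invert the relation \eqref{eq_remainder_f_even}. Your proof supplies what the paper omits, and it is the standard one.
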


The next proposition provides the Möbius inversion of \eqref{eq_remainder_f_even}, \ie the deduction of $\widehat f_e$ from \eqref{eq_remainder_f_even}.
Indeed, choosing in the above corollary $g(n):= R_{\sigma n}\pbig{\widehat{\fe}}$ and $f(n):=-2\sqrt{2\pi}\widehat{\fe}(n\sigma)$, then \eqref{eq_remainder_f_even} with $\sigma$ replaced by $\sigma n$ is just $g(n)=\sum_{m=1}^\un f(mn)$. Hence one obtains from Corollary~\ref{cor_Loxton}:

\begin{proposition}\label{prop_Loxton}
Let $f\in \A$ and suppose that
\begin{equation}\label{eq_prop_Loxton_1}
  \sum_{k=1}^\infty k^\eps \abs{\wh{\fe}(k\sigma)}<\infty
\end{equation}
for some $\eps>0$. Then
\begin{equation}\label{eq_prop_Loxton_2}
 \wh{f}\ev(\sigma) = -\rez{2\sqrt{2\pi}}\sum_{k=1}^\infty \mu(k)\,R_{k\sigma}(f\ev)\,.
\end{equation}
\end{proposition}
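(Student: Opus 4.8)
The plan is to apply Corollary~\ref{cor_Loxton} directly, after verifying its hypothesis. First I would set
\[
  f(n):=-2\sqrt{2\pi}\,\wh{\fe}(n\sigma),\qquad g(n):=R_{n\sigma}(\fe)\qquad(n\in\N).
\]
By \eqref{eq_remainder_f_even}, with $\sigma$ replaced by $n\sigma$ (which is legitimate since $\fe\in\A$ together with $f$), we have
\[
  g(n)=R_{n\sigma}(\fe)=-2\sqrt{2\pi}\sum_{m=1}^\infty \wh{\fe}(mn\sigma)=\sum_{m=1}^\infty f(mn),
\]
so the first relation in part~(i) of the corollary holds identically. The summability condition $\sum_{n=1}^\infty n^\eps|f(n)|<\infty$ is, up to the constant $2\sqrt{2\pi}$, exactly the assumption \eqref{eq_prop_Loxton_1}. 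Hence part~(i) of Corollary~\ref{cor_Loxton} is satisfied, and the equivalence gives part~(ii):
\[
  f(n)=\sum_{m=1}^\infty \mu(m)\,g(mn)\qquad(n\in\N).
\]
Taking $n=1$ and substituting back the definitions of $f$ and $g$,
\[
  -2\sqrt{2\pi}\,\wh{\fe}(\sigma)=\sum_{m=1}^\infty \mu(m)\,R_{m\sigma}(\fe),
\]
and dividing by $-2\sqrt{2\pi}$ yields \eqref{eq_prop_Loxton_2}.

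The only point that needs a little care — and the main (mild) obstacle — is the justification that \eqref{eq_remainder_f_even} may be used with $\sigma$ replaced by any $n\sigma$, i.e.\ that $\fe$ lies in $\A$ whenever $f$ does. This follows because $\fe(t)=\bigl(f(t)+f(-t)\bigr)/2$ and $\A$ is closed under the reflection $t\mapsto -t$ and under taking linear combinations: if $f\in L^1(\R)\cap\CRb$ then so is $f(-\,\cdot\,)$, and bounded variation (of $f$ or of $\wh f$) is likewise preserved, since $\widehat{f(-\,\cdot\,)}(v)=\wh f(-v)$. The formula \eqref{eq_remainder_f_even} itself was already derived in the text from $R_\sigma(f)=R_\sigma(\fe)$, the evenness $\wh{\fe}(v)=\wh{\fe}(-v)$, and $R_{n\sigma}(\fe)=-\sqrt{2\pi}\sideset{}{'}\sum_{k\in\Zprime}\wh{\fe}(kn\sigma)$; combining the $k$ and $-k$ terms gives the stated one-sided sum. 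Since all of this is routine and the heavy lifting has been delegated to Corollary~\ref{cor_Loxton}, the proof is short; I would present it essentially as the paragraph of motivation already given in the excerpt, made into a formal two- or three-line argument.
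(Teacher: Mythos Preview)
Your argument is correct and coincides with the paper's own: set $f(n):=-2\sqrt{2\pi}\,\wh{\fe}(n\sigma)$, $g(n):=R_{n\sigma}(\fe)$, recognize \eqref{eq_remainder_f_even} (with $\sigma$ replaced by $n\sigma$) as $g(n)=\sum_{m} f(mn)$, and invoke Corollary~\ref{cor_Loxton}. The extra care you take in checking that $\fe\in\A$ is a welcome addition but not something the paper spells out.
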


The following lemma shows  that \eqref{eq_prop_Loxton_2} holds when $f$ satisfies the hypotheses of Theorem~\ref{thm_main_0}\,b).

\begin{lemma}\label{lem1}
If $\abs{v}^\alpha \absbig{\wh{f}(v)}\in \CRb$ for some $\alpha>1$, then \eqref{eq_prop_Loxton_1} holds for $\eps\in (0, \alpha-1)$.
\end{lemma}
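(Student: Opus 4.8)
The plan is to bound the tail sum $\sum_{k=1}^\infty k^\eps \abs{\wh{\fe}(k\sigma)}$ directly, exploiting the hypothesis $\abs{v}^\alpha\abs{\wh f(v)}\in\CRb$ together with the elementary fact that the even part $\fe$ has $\wh{\fe}(v)=\wh{\fe}(-v)$ and $\abs{\wh{\fe}(v)}\le\abs{\wh f(v)}$ for all $v$ (since $\wh{\fe}=(\wh f(v)+\wh f(-v))/2$ by linearity of the Fourier transform and the change of variable $u\mapsto -u$). First I would set $M:=\sup_{v\in\R}\abs{v}^\alpha\abs{\wh f(v)}<\infty$, which is finite precisely because $\abs{v}^\alpha\abs{\wh f(v)}$ is a bounded function. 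For $k\ge 1$ this gives $\abs{\wh{\fe}(k\sigma)}\le\abs{\wh f(k\sigma)}\le M/(k\sigma)^\alpha$.

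Next I would substitute this into the series:
\[
  \sum_{k=1}^\infty k^\eps\abs{\wh{\fe}(k\sigma)}\le \frac{M}{\sigma^\alpha}\sum_{k=1}^\infty k^{\eps-\alpha}=\frac{M}{\sigma^\alpha}\,\zeta(\alpha-\eps),
\]
and this last series converges exactly when $\alpha-\eps>1$, i.e.\ when $\eps<\alpha-1$. Since by assumption $\alpha>1$, the interval $(0,\alpha-1)$ is nonempty, so for every $\eps$ in that range the sum is finite, which is precisely \eqref{eq_prop_Loxton_1}.

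There is essentially no obstacle here — the only point requiring a word of care is the passage from $\wh f$ to $\wh{\fe}$. One must note that $\fe\in\Lone$ (indeed $\fe\in\A$, or at least $\fe\in L^1(\R)\cap\CRb$, since $f\in\A$ and the map $f\mapsto\fe$ preserves these classes), so that $\wh{\fe}$ is defined as an ordinary Lebesgue integral and the identity $\wh{\fe}(v)=\tfrac12(\wh f(v)+\wh f(-v))$ holds pointwise; the bound $\abs{\wh{\fe}(v)}\le\abs{\wh f(v)}$ then follows only after also using $\abs{\wh f(-v)}=\abs{v}^{-\alpha}\cdot\abs{-v}^\alpha\abs{\wh f(-v)}\le M\abs{v}^{-\alpha}$, so in fact it is cleanest to bound $\abs{\wh{\fe}(k\sigma)}\le\tfrac12(\abs{\wh f(k\sigma)}+\abs{\wh f(-k\sigma)})\le M(k\sigma)^{-\alpha}$ and proceed as above. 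Everything else is the convergence of a $p$-series, so the lemma follows immediately.
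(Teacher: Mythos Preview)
Your proof is correct and follows essentially the same route as the paper's: multiply and divide by $(k\sigma)^\alpha$, bound the numerator by the supremum of $|v|^\alpha|\wh f(v)|$, and recognise the remaining series as $\zeta(\alpha-\eps)$, which converges precisely when $\eps<\alpha-1$. The paper's version is slightly terser (it uses $\sup_{|v|\ge\sigma}$ rather than the global sup and does not spell out the passage from $\wh f$ to $\wh{\fe}$), but the argument is the same. One small caution: the inequality $|\wh{\fe}(v)|\le|\wh f(v)|$ that you first wrote down is not valid in general; your self-correction via $|\wh{\fe}(k\sigma)|\le\tfrac12\bigl(|\wh f(k\sigma)|+|\wh f(-k\sigma)|\bigr)\le M(k\sigma)^{-\alpha}$ is the right way to handle it, and you should lead with that rather than the incorrect intermediate claim.
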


\begin{proof}
For $\eps\in(0, \alpha-1)$, we have
\[
\sum_{k=1}^\infty k^\eps \abs{\wh{f}\ev(k\sigma)} = \sum_{k=1}^\infty \frac{(k\sigma)^\alpha \abs{\wh{f}\ev(k\sigma)}}{\sigma^\alpha k^{\alpha-\eps}}
\le \sup_{\abs{v}\ge\sigma}\brBig{\abs{v}^\alpha \absbig{\wh{f}(v)}} \cdot \frac{\zeta(\alpha-\eps)}{\sigma^\alpha} < \infty.\qedhere
\]
\end{proof}

We are now ready for our main equivalence theorem.

\begin{theorem}\label{thm_main_3}
Let the hypotheses of Theorem~\ref{thm_main_0}\,b) be satisfied. For $t_0>0$ let $\lambda$ be a non-negative, nonincreasing function on $[t_0, \infty)$. Then the following statements are equivalent:
\begin{aufzaehlung}{$(ii)$}
\item $\dist_\infty(\Ds{\alpha}f\ev, B_\sigma^1)= \Oh(\lambda(\sigma))\quad (\sigma\to\infty)$;
\item $R_\sigma(f) = \Oh(\sigma^{-\alpha} \lambda(\sigma)) \quad (\sigma\to\infty)$.
\end{aufzaehlung}
\end{theorem}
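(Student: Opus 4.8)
The plan is to prove the two implications separately, exploiting the symmetry between the distance functional and the remainder that was already established. The implication $(i)\Rightarrow(ii)$ is the easy direction: by Theorem~\ref{thm_main_0}\,b) applied to $f\ev$ (note $R_\sigma(f)=R_\sigma(f\ev)$ since $R_\sigma(f\od)=0$), one has
\[
  \abs{R_\sigma(f)} = \abs{R_\sigma(f\ev)} \le h^\alpha\frac{2\zeta(\alpha)}{(2\pi)^{\alpha-1/2}}\dist_\infty(\Ds{\alpha}f\ev, B_\sigma^1) = \Oh\pbig{\sigma^{-\alpha}\lambda(\sigma)},
\]
using $h=2\pi/\sigma$ and the hypothesis $(i)$. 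So the first direction is essentially a one-line consequence of the machinery already in place.

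For the converse $(ii)\Rightarrow(i)$, I would start from the Möbius-inverted formula of Proposition~\ref{prop_Loxton}. The hypotheses of Theorem~\ref{thm_main_0}\,b) include $\abs{v}^\alpha\wh f(v)\in\CRb$ with $\alpha>1$, so Lemma~\ref{lem1} guarantees that \eqref{eq_prop_Loxton_1} holds for $\eps\in(0,\alpha-1)$, and hence \eqref{eq_prop_Loxton_2} is valid:
\[
  \wh{f}\ev(\sigma) = -\rez{2\sqrt{2\pi}}\sum_{k=1}^\infty \mu(k)\,R_{k\sigma}(f\ev).
\]
The idea is then to multiply both sides by $\sigma^\alpha$, take absolute values, insert the hypothesis $R_{k\sigma}(f\ev)=R_{k\sigma}(f)=\Oh((k\sigma)^{-\alpha}\lambda(k\sigma))$, and use $\abs{\mu(k)}\le 1$ together with the monotonicity of $\lambda$ (so $\lambda(k\sigma)\le\lambda(\sigma)$ for $k\ge 1$, once $\sigma\ge t_0$) to obtain
\[
  \sigma^\alpha\abs{\wh f\ev(\sigma)} \le C\sum_{k=1}^\infty \frac{\sigma^\alpha}{(k\sigma)^\alpha}\lambda(k\sigma) \le C\,\zeta(\alpha)\,\lambda(\sigma),
\]
which gives $\abs{v}^\alpha\abs{\wh f\ev(v)}=\Oh(\lambda(\abs v))$ for large $\abs v$. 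Since $\wh{\Ds\alpha f\ev}(v)=\abs v^\alpha\wh f\ev(v)$ and $\lambda$ is nonincreasing, Proposition~\ref{prop_prop4.1_neu}\,c) (applied to $f\ev$) then yields $\dist_\infty(\Ds\alpha f\ev,B_\sigma^1)=\sup_{\abs v\ge\sigma}\abs v^\alpha\abs{\wh f\ev(v)}=\Oh(\lambda(\sigma))$, which is $(i)$.

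The main obstacle is a bookkeeping subtlety in the converse direction: the $\Oh$-constant implicit in $R_{k\sigma}(f)=\Oh((k\sigma)^{-\alpha}\lambda(k\sigma))$ must be uniform in $k$, which holds because the estimate \eqref{eq_main_1_alpha} in Theorem~\ref{thm_main_0}\,b) is uniform in $\sigma$; but one must take care that the hypothesis $(ii)$, stated as $\sigma\to\infty$, is being invoked at the points $k\sigma$ for all $k\ge1$, so one needs $(ii)$ to hold with a genuine constant valid for all arguments $\ge t_0$, not merely asymptotically — this is legitimate since we may absorb the finitely many (in fact, zero) exceptional arguments into the constant, and since $\lambda$ is defined and monotone on all of $[t_0,\infty)$. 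A second minor point is ensuring the interchange of summation and the $\Oh$-estimate is justified by absolute convergence, which is exactly what Lemma~\ref{lem1} provides via \eqref{eq_prop_Loxton_1}. Once these uniformity issues are handled, the computation is routine.
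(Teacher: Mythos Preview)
Your proof is correct and follows essentially the same route as the paper's: apply Theorem~\ref{thm_main_0}\,b) to $f\ev$ for $(i)\Rightarrow(ii)$, and for $(ii)\Rightarrow(i)$ use the M\"obius-inverted formula of Proposition~\ref{prop_Loxton} (justified via Lemma~\ref{lem1}), bound $\abs{\mu(k)}\le 1$, invoke the monotonicity of $\lambda$ to pull $\lambda(k\sigma)\le\lambda(\sigma)$ out of the sum, and finish with Proposition~\ref{prop_prop4.1_neu}\,c). One small remark on your commentary: the uniformity of the $\Oh$-constant in $R_{k\sigma}(f)=\Oh((k\sigma)^{-\alpha}\lambda(k\sigma))$ has nothing to do with Theorem~\ref{thm_main_0}\,b) (which you are not using in this direction); it follows directly from the definition of big-$\Oh$ in hypothesis $(ii)$, which furnishes a single constant $c$ valid for all arguments $\ge\sigma_0$, and then $k\sigma\ge\sigma_0$ automatically for every $k\ge1$ once $\sigma\ge\sigma_0$.
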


\begin{proof}
Suppose that (i) holds. If $f$ satisfies the hypotheses of Theorem~\ref{thm_main_0}\,b), then so does $f\ev$. Therefore \eqref{eq_main_1_alpha} also holds with $f$ replaced by $f\ev$. Now, employing (i) and noting that $R_\sigma(f)=R_\sigma(f\ev)$, we readily obtain (ii) by \eqref{eq_main_1_alpha}.

Conversely, suppose that (ii) holds. Then there exists a constant $c>0$ and a $\sigma_0>t_0$ such that

\[
  \abs{R_\sigma(f\ev)}\,=\, \abs{R_\sigma(f)}\,\le\, c\, \frac{\lambda(\sigma)}{\sigma^\alpha}\qquad (\sigma\ge\sigma_0).
\]
Now Proposition~\ref{prop_Loxton} yields
\begin{align*}
\abs{\wh{f}\ev(\sigma)} &{}\le \rez{2\sqrt{2\pi}}\, \sum_{k=1}^\infty \abs{R_{k\sigma}(f\ev)} \le \frac{c}{2\sqrt{2\pi}}\, \sum_{k=1}^\infty
                                 \frac{\lambda(k\sigma)}{(k\sigma)^\alpha}\\[2ex]
&{}\le \frac{c}{2\sqrt{2\pi}}\cdot \frac{\lambda(\sigma)}{\sigma^\alpha} \sum_{k=1}^\infty \rez{k^\alpha} = \frac{c\,\zeta(\alpha)}{2\sqrt{2\pi}}\cdot
                                 \frac{\lambda(\sigma)}{\sigma^\alpha}
\end{align*}
%
%
for $\sigma\ge\sigma_0$. Thus
\[
 \bigabs{\sigma^\alpha \wh{f}\ev(\sigma)} \le \frac{c\,\zeta(\alpha)}{2\sqrt{2\pi}}\: \lambda(\sigma),
\]
which implies that
\[
  \sup_{\abs{v}\ge\sigma}\brBig{\abs{v}^\alpha \abs{\wh{f}\ev(v)}}\le \frac{c\,\zeta(\alpha)}{2\sqrt{2\pi}}\:\lambda(\sigma)\qquad(\sigma\ge\sigma_0).
\]
Now the considerations in Section~\ref{sec_fractional_der} concerning the strong Riesz derivative and its distance from $B_\sigma^1$ show that (i) holds.
\end{proof}

Theorem~\ref{thm_main_3} shows that the estimate in \eqref{eq_main_1_alpha} of Theorem~\ref{thm_main_0}  is optimal, at least for even functions, in the sense that both sides have the same order for $\sigma \to \un$. Indeed, if $f$ is even, then one has \eg for any $\alpha,\beta>0$,
\[
  R_\sigma (f)=\Oh\pbig{\sigma^{-\beta}}\iff \dist_\un(D_s^{\br{\alpha}}f,B^1_\sigma)=\Oh\pbig{\sigma^{-\alpha-\beta}}\qquad (\sigma\to\infty).
\]

Of course one could also prove an analogue of Theorem~\ref{thm_main_3} for ordinary derivatives $f^{(s)}$, $s\ge 2$.

\section{An equivalence theorem without using the Möbius inversion}
Knowing the order of convergence of $R_\sigma(f)$ as $\sigma\to\infty$, we cannot say anything about the regularity of $f\od$. In fact, due to a symmetry of its graph, $R_\sigma(f\od)=0$,  even if $f\od$ is a very erratic function analytically. However, the regularity of a function does not change under a translation of its argument, while possible symmetries with respect to the axes of the coordinate system can be destroyed. In Theorem~\ref{thm_main_3}, we can therefore extend statement (i) to $f$ (instead of $f\ev$), if in statement (ii) we require that the order of convergence does not change under a translation. We only need translations within the step size $h$.

Let $f_\tau:= f(\tau + \cdot)$. It is known that $\wh{f}_\tau(v)=\wh{f}(v)e^{i\tau v}$. Therefore, by \eqref{eq_restglied_summe},
\[
  R_\sigma(f_\tau)= -\sqrt{2\pi} \sideset{}{'}\sum_{k\in\Zprime} \wh{f}(k\sigma) e^{i\tau k\sigma}.
\]
Setting $x:=\tau\sigma$, we see that the right-hand side is a Fourier series in $x$. If $\tau$ runs from $-\pi/\sigma$ to $\pi/\sigma$, then $x$ runs through a period. Thus, if $\abs{R_\sigma(f_\tau)}\le c \sigma^{-\alpha} \lambda(\sigma)$ for $\tau\in [-\pi/\sigma, \pi/\sigma]$, then as a consequence of  Parseval's equation,
\[
  2\pi \sideset{}{'}\sum_{k\in\Zprime} \abs{\wh{f}(k\sigma)}^2 \le \frac{c^2 \lambda^2(\sigma)}{\sigma^{2\alpha}}.
\]
This implies -- without the need of Möbius inversion -- that
\[
  \abs{\wh{f}(\sigma)} \le \frac{c}{\sqrt{2\pi}}\cdot \frac{\lambda(\sigma)}{\sigma^{\alpha}}.
\]

These considerations show that the following variant of Theorem~\ref{thm_main_3} holds.

\begin{theorem}\label{thm3}
Let the hypotheses of Theorem~\ref{thm_main_0}\,b) be satisfied. For $t_0>0$ let $\lambda$ be a non-negative, nonincreasing function on $[t_0, \infty)$.
Then the following statements are equivalent:
\begin{aufzaehlung}{$(ii)$}
\item $\dist_\infty(\Ds{\alpha}f, B_\sigma^1) = \Oh(\lambda(\sigma))\quad (\sigma\to\infty)$;
\item $R_\sigma(f(\tau + \cdot)) =\Oh(\sigma^{-\alpha} \lambda(\sigma)) \quad \p{\sigma\to\infty$ uniformly for $-\frac{\pi}{\sigma}\le \tau\le \frac{\pi}{\sigma}}$.
\end{aufzaehlung}
\end{theorem}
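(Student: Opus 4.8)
The plan is to mirror the structure of the proof of Theorem~\ref{thm_main_3}, replacing the Möbius-inversion step by the Parseval-equation argument sketched in the paragraph immediately preceding the statement. For the implication $(i)\Rightarrow(ii)$, I would argue exactly as in Theorem~\ref{thm_main_3}: translation does not affect membership in the class defined by the hypotheses of Theorem~\ref{thm_main_0}\,b), since $\wh{f_\tau}(v)=\wh f(v)e^{i\tau v}$ and hence $\abs{v}^\alpha\wh{f_\tau}(v)$ has the same modulus as $\abs{v}^\alpha\wh f(v)$; in particular $\abs{v}^\alpha\wh{f_\tau}(v)\in\Ltwo\cap\CRb$ and $\dist_\un(\Ds\alpha f_\tau,B_\sigma^1)=\dist_\un(\Ds\alpha f,B_\sigma^1)$. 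Applying estimate \eqref{eq_main_1_alpha} to $f_\tau$ in place of $f$ then gives $\abs{R_\sigma(f_\tau)}\le h^\alpha\,2\zeta(\alpha)(2\pi)^{-\alpha+1/2}\dist_\un(\Ds\alpha f,B_\sigma^1)=\Oh(\sigma^{-\alpha}\lambda(\sigma))$, uniformly in $\tau$, since the bound does not depend on $\tau$ at all. This direction is routine.

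For the converse $(ii)\Rightarrow(i)$, I would carry out the computation already indicated in the text, but make the steps explicit. First, from \eqref{eq_restglied_summe} and $\wh{f_\tau}(v)=\wh f(v)e^{i\tau v}$ one gets $R_\sigma(f_\tau)=-\sqrt{2\pi}\sideset{}{'}\sum_{k\in\Zprime}\wh f(k\sigma)e^{ik\sigma\tau}$; writing $x:=\sigma\tau$ this is (minus $\sqrt{2\pi}$ times) the Fourier series $\sum_{k\ne0}\wh f(k\sigma)e^{ikx}$ on the period $x\in[-\pi,\pi]$, whose $k$-th Fourier coefficient is $\wh f(k\sigma)$. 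Assuming the hypothesis $(ii)$, fix $c>0$ and $\sigma_0>t_0$ with $\abs{R_\sigma(f_\tau)}\le c\,\sigma^{-\alpha}\lambda(\sigma)$ for all $\sigma\ge\sigma_0$ and all $\tau\in[-\pi/\sigma,\pi/\sigma]$. Then Parseval's identity for this Fourier series gives
\[
  2\pi\sideset{}{'}\sum_{k\in\Zprime}\abs{\wh f(k\sigma)}^2
  = \rez{2\pi}\int_{-\pi}^{\pi}\absbig{R_\sigma(f_{x/\sigma})}^2\,dx
  \le \frac{c^2\lambda^2(\sigma)}{\sigma^{2\alpha}}\qquad(\sigma\ge\sigma_0).
\]
In particular, retaining only the terms $k=\pm1$ and using $\abs{\wh f(-\sigma)}=\abs{\wh f(\sigma)}$ when $f$ is real (or simply dropping all but the $k=1$ term in general), we obtain $\abs{\wh f(\sigma)}\le (c/\sqrt{2\pi})\,\sigma^{-\alpha}\lambda(\sigma)$ for $\sigma\ge\sigma_0$.

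From here I would finish exactly as in the proof of Theorem~\ref{thm_main_3}: replacing $\sigma$ by any $v$ with $\abs v\ge\sigma\ge\sigma_0$ and using that $\lambda$ is nonincreasing yields $\abs{v}^\alpha\abs{\wh f(v)}\le(c/\sqrt{2\pi})\,\abs{v}^\alpha\,\abs{v}^{-\alpha}\lambda(\abs v)\le (c/\sqrt{2\pi})\,\lambda(\sigma)$, so $\sup_{\abs v\ge\sigma}\abs{v}^\alpha\abs{\wh f(v)}\le(c/\sqrt{2\pi})\lambda(\sigma)$; then Proposition~\ref{prop_prop4.1_neu}\,c) (applicable since $\abs{v}^\alpha\wh f(v)\in\Ltwo\cap\CRb$ by hypothesis, so $\Ds\alpha f$ exists in $\Ltwo$ and $\wh{\Ds\alpha f}(v)=\abs v^\alpha\wh f(v)$) identifies this supremum with $\dist_\un(\Ds\alpha f,B_\sigma^1)$, giving $(i)$. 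I expect the main point requiring care to be the Parseval step: one must check that $\sum_{k\ne0}\wh f(k\sigma)e^{ikx}$ is genuinely the Fourier series of an $L^2$ function on the period — equivalently that $\sum_{k\ne0}\abs{\wh f(k\sigma)}^2<\infty$, which is guaranteed by the hypotheses of Theorem~\ref{thm_main_0}\,b) via $\abs{v}^\alpha\wh f(v)\in\CRb$ and $\alpha>1$ (so $\sum_k\abs{\wh f(k\sigma)}^2\le(\sup_{\abs v\ge\sigma}\abs v^\alpha\abs{\wh f(v)})^2\sigma^{-2\alpha}\sum_k k^{-2\alpha}<\infty$) — and that the bounded convergence used to pass from the pointwise bound on $R_\sigma(f_\tau)$ to the integral bound is legitimate, which it is since $R_\sigma(f_\tau)$ is a uniformly convergent (hence continuous, bounded) trigonometric series in $\tau$ under the same summability.
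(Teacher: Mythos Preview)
Your proposal is correct and follows exactly the approach the paper intends: the paper's own ``proof'' of Theorem~\ref{thm3} consists solely of the Parseval computation displayed in the paragraph preceding the statement together with the phrase ``These considerations show that the following variant of Theorem~\ref{thm_main_3} holds,'' and your write-up simply fleshes out those considerations (including the $(i)\Rightarrow(ii)$ direction, which the paper leaves implicit). If anything you are more careful than the paper, since you explicitly verify the $\ell^2$-summability needed for Parseval and note that the $k=-1$ term handles negative $v$; there is nothing to correct.
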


Again it is possible to provide an analogue of Theorem~\ref{thm3} for ordinary derivatives $f^{(s)}$, $s\ge 2$.

Theorems~\ref{thm_main_3} and \ref{thm3} show in particular that the distance functional provides the right measure for estimating the remainder in the trapezoidal rule \eqref{eq_qudrature_formula}. Indeed, the orders for $h\to 0+$ or, equivalently, $\sigma\to\un$ on the right-hand side of estimates \eqref{eq_main_1}, \eqref{eq_main_1_alpha} coincide exactly with the orders of the remainder in \eqref{eq_qudrature_formula}. Concerning the order of remainder of the composite trapezoidal rule on compact intervals, the reader is referred to \cite{Brass_1979}.

In the following section we go even a step further. We will show that also the constants in \eqref{eq_main_1} and \eqref{eq_main_1_alpha} cannot be improved.

\section{Sharpness of Theorem~\ref{thm_main_0}}
The aim of this section is to show that the estimates of Theorem~\ref{thm_main_0} are best possible regarding the constants . To this end we construct an extremal function for which there holds equality in \eqref{eq_main_1} and \eqref{eq_main_1_alpha}, respectively, at least for one $\sigma>0$.

\begin{lemma}\label{prop_existence_extremal}
For each $\alpha>1$ there exist a function $\phi_\alpha\in \A$ such that $\abs{v}^\alpha \wh{\phi_\alpha}(v)\in \Ltwo\cap\CRb$ with $\abs{k}^\alpha\wh{\phi_\alpha}(k)=1$ for all $k\in\Z\setminus\br{0}$, and $\dist_\infty(\Ds{\alpha}\phi_\alpha, B_1^1)=1$.

The same holds for $\alpha=s\in\N$, $s\ge 2$ with the Riesz derivative $\Ds{s}\phi_s$ replaced by the ordinary derivative $\phi_s^{(s)}$.
\end{lemma}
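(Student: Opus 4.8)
The idea is to build $\phi_\alpha$ directly in the Fourier domain, prescribing $\wh{\phi_\alpha}$ so that it takes the value $|k|^{-\alpha}$ at every nonzero integer $k$, decays fast enough that $\phi_\alpha\in\A$, and is dominated in modulus by $|v|^{-\alpha}$ for $|v|\ge 1$ (so that the supremum defining the distance is attained exactly at $v=\pm 1$). A natural candidate is to take $\wh{\phi_\alpha}$ to be an even, nonnegative, continuous function that \emph{interpolates} $g(v):=|v|^{-\alpha}$ at the nonzero integers and satisfies $0\le\wh{\phi_\alpha}(v)\le g(v)$ for $|v|\ge 1$, while being smooth and compactly supported near the origin (to avoid the non-integrable singularity of $g$ at $0$ and to keep $\wh{\phi_\alpha}\in L^2$). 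One clean construction: on $[1,\infty)$ let $\wh{\phi_\alpha}$ be the piecewise-linear interpolant of the points $(k,k^{-\alpha})$, $k\ge 1$; on $[0,1]$ let it be, say, a fixed smooth bump decreasing from $\wh{\phi_\alpha}(0)$ to the value $1$ at $v=1$; and extend evenly. Since $g$ is convex on $[1,\infty)$, the chord between consecutive points lies \emph{above} $g$, so a small correction is needed; alternatively interpolate $(k,k^{-\alpha})$ by the \emph{lower} convex envelope or simply multiply the linear interpolant by a suitable $C^\infty$ damping — the precise shape does not matter, only the three properties: (a) $\wh{\phi_\alpha}(k)=k^{-\alpha}$ for $k\in\N$, (b) $\wh{\phi_\alpha}$ even, continuous, nonnegative, with $\sup_{|v|\ge 1}|v|^\alpha\wh{\phi_\alpha}(v)=1$, attained at $v=1$, and (c) $\wh{\phi_\alpha}\in L^1(\R)\cap L^2(\R)$ with enough smoothness that its inverse Fourier transform lies in $\A$.

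Once $\wh{\phi_\alpha}$ is fixed, set $\phi_\alpha:=(\wh{\phi_\alpha})^{\vee}$, the inverse Fourier transform. Property (c) guarantees $\phi_\alpha\in L^1(\R)\cap C_b(\R)$; to land in $\A$ we want $\phi_\alpha\in\mathrm{BV}(\R)$ \emph{or} $\wh{\phi_\alpha}\in\mathrm{BV}(\R)$, and the latter is immediate from the construction (a piecewise-linear, eventually summably-sloped function times a smooth bump is of bounded variation). So $\phi_\alpha\in\A$. Next, $|v|^\alpha\wh{\phi_\alpha}(v)$ is bounded (equal to $v^\alpha$ times a smooth bump near $0$, hence bounded there, and $\le 1$ for $|v|\ge 1$) and continuous, and it is in $L^2$ because near $0$ it behaves like $v^\alpha$ and for $|v|\ge 1$ it is bounded by $1$ and supported on a set where $\wh{\phi_\alpha}\le v^{-\alpha}\in L^2(|v|\ge1)$ — in fact one should arrange the tail to be genuinely $L^2$, which the piecewise-linear interpolant of $k^{-\alpha}$ is since $\alpha>1/2$. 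Hence $|v|^\alpha\wh{\phi_\alpha}(v)\in L^2(\R)\cap C_b(\R)$, so by Proposition~\ref{prop_Riesz_existence_equiv} (together with the surjectivity of the $L^2$ Fourier transform, as used in the proof of Proposition~\ref{prop_prop4.1_neu}) the Riesz derivative $\Ds{\alpha}\phi_\alpha$ exists in $L^2(\R)$ with $\wh{\Ds{\alpha}\phi_\alpha}(v)=|v|^\alpha\wh{\phi_\alpha}(v)$.

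Finally, Proposition~\ref{prop_prop4.1_neu}\,c) gives
\[
  \dist_\infty(\Ds{\alpha}\phi_\alpha,B_1^1)=\sup_{|v|\ge 1}|v|^\alpha\bigabs{\wh{\phi_\alpha}(v)}=1
\]
by property (b). For the integer case $\alpha=s\ge 2$ one repeats the argument verbatim, noting that $v^s\wh{\phi_s}(v)=|v|^s\wh{\phi_s}(v)$ (since $\wh{\phi_s}\ge 0$) lies in $L^2(\R)\cap C_b(\R)$, so Proposition~\ref{prop_prop4.1_neu}\,b) yields that $\phi_s^{(s)}$ exists in $L^2(\R)$ and $\dist_\infty(\phi_s^{(s)},B_1^1)=\sup_{|v|\ge1}|v|^s|\wh{\phi_s}(v)|=1$.

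\textbf{Main obstacle.} The delicate point is reconciling the two competing demands on $\wh{\phi_\alpha}$ for $|v|\ge 1$: it must pass \emph{exactly} through the points $(k,k^{-\alpha})$ yet stay \emph{below} the envelope $|v|^{-\alpha}$ so that the supremum in the distance formula equals $1$ and is not exceeded between integers. Because $|v|^{-\alpha}$ is strictly convex on $[1,\infty)$, naive linear interpolation overshoots; the fix is to interpolate by arcs that are concave between consecutive integers (e.g. join $(k,k^{-\alpha})$ and $(k+1,(k+1)^{-\alpha})$ by a short concave spline, or equivalently compose the linear interpolant with a concave reparametrisation), which automatically lies on or below the convex curve $|v|^{-\alpha}$ while still interpolating. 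Verifying that this choice keeps $\wh{\phi_\alpha}$ nonnegative, of bounded variation, and in $L^2$ is routine once the shape is fixed. Everything else — membership in $\A$, existence of the Riesz (or ordinary) derivative, and evaluation of the distance — then follows mechanically from Propositions~\ref{prop_Riesz_existence_equiv} and~\ref{prop_prop4.1_neu}.
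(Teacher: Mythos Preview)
Your overall strategy---prescribe $\wh{\phi_\alpha}$ directly so that it hits $|k|^{-\alpha}$ at each nonzero integer, stays below $|v|^{-\alpha}$ for $|v|\ge 1$, and has enough regularity to force $\phi_\alpha\in\A$---is exactly the paper's. But your handling of what you call the ``main obstacle'' contains a convexity error that breaks the construction.

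You observe correctly that the chord between $(k,k^{-\alpha})$ and $(k{+}1,(k{+}1)^{-\alpha})$ lies \emph{above} the convex curve $|v|^{-\alpha}$, so piecewise-linear interpolation overshoots. Your proposed fix is to replace the chord by a \emph{concave} arc, claiming this ``automatically lies on or below'' $|v|^{-\alpha}$. This is backwards: a concave interpolant between two prescribed endpoints lies \emph{above} its chord, which already lies above the convex curve; concave arcs make the overshoot worse, not better. What you actually need is an interpolant that dips \emph{below} $|v|^{-\alpha}$ between integers while still touching it at each integer.

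The paper resolves this by abandoning the idea of a connected interpolant. It takes
\[
   \wh{\phi_\alpha}(v)=\sideset{}{'}\sum_{j\in\Z}\frac{1}{|j|^\alpha}\pbigg{1-\frac{|v-j|}{a_j}}_+,
\]
a sum of disjoint narrow triangular tents of half-width $a_j\le(\alpha+1)^{-1}$ centred at the nonzero integers. Between the tents $\wh{\phi_\alpha}$ is identically zero, so there is no overshoot; on each tent a one-line derivative calculation (this is where the constraint $a_j\le(\alpha+1)^{-1}$ enters) shows that $v\mapsto|v|^\alpha\wh{\phi_\alpha}(v)$ is maximised precisely at $v=j$, with value $1$. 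The disjoint-tent structure also disposes of the second point you leave vague: each tent is the Fourier transform of $\psi_j(t)=\frac{a_j}{\sqrt{2\pi}}\sinc^2\bigl(\frac{a_jt}{2\pi}\bigr)e^{ijt}\in L^1(\R)$ with $\norm{\psi_j}_{L^1}=\sqrt{2\pi}$, and the series $\sum'_j|j|^{-\alpha}\psi_j$ converges in $L^1$-norm, giving $\phi_\alpha\in L^1(\R)\cap C_b(\R)$ and $\wh{\phi_\alpha}\in W^{1,1}(\R)$ (hence $\mathrm{BV}$) directly. Your assertion that ``property (c) guarantees $\phi_\alpha\in L^1(\R)$'' is not justified: for an $\wh{\phi_\alpha}$ assembled from infinitely many spline pieces plus a bump, integrability of the inverse transform is not automatic and would itself require a careful argument.
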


We postpone the construction of such a function, and show first how this function can serve as an extremal function in Theorem~\ref{thm_main_0}.

\begin{theorem}
a) For each $s\in\N$, $s\ge 2$, and each $\sigma>0$ there exists a function $\phi_{s,\sigma}$ satisfying the hypotheses of Theorem~\ref{thm_main_0}\,a) such that
\begin{equation}\label{eq_main_1a_=}
  \bigabs{R_\sigma(\phi_{s,\sigma})} = h^s\frac{2\zeta(s)}{(2\pi)^{s-1/2}}\dist_\infty\pbig{\phi_{s,\sigma}^{(s)}, B_\sigma^1} \qquad(h=2\pi/\sigma).
\end{equation}

\noindent
b) For each $\alpha>1$ and each $\sigma>0$ there exists a function $\phi_{\alpha,\sigma}$ satisfying the hypotheses of Theorem~\ref{thm_main_0}\,b) such that
\begin{equation}\label{eq_main_1b_=}
  \bigabs{R_\sigma(\phi_{\alpha,\sigma})} = h^s\frac{2\zeta(s)}{(2\pi)^{\alpha-1/2}}\dist_\infty\pbig{\Ds{\alpha}\phi_{\alpha,\sigma}, B_\sigma^1} \qquad(h=2\pi/\sigma).
\end{equation}
\end{theorem}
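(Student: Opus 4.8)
The plan is to obtain the extremal functions by dilating the single function $\phi_\alpha$ supplied by Lemma~\ref{prop_existence_extremal}. For part~b), given $\alpha>1$ and $\sigma>0$, set $\phi_{\alpha,\sigma}(t):=\phi_\alpha(\sigma t)$, so that $\wh{\phi_{\alpha,\sigma}}(v)=\sigma^{-1}\wh{\phi_\alpha}(v/\sigma)$. A dilation of the argument preserves membership in $L^1(\R)\cap\CRb$ and the property that either a function or its Fourier transform is of bounded variation; hence $\A$ is dilation invariant and $\phi_{\alpha,\sigma}\in\A$. Likewise $\abs{v}^\alpha\wh{\phi_{\alpha,\sigma}}(v)=\sigma^{\alpha-1}\abs{v/\sigma}^\alpha\wh{\phi_\alpha}(v/\sigma)$ lies in $\Ltwo\cap\CRb$ because $\abs{v}^\alpha\wh{\phi_\alpha}(v)$ does. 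Thus $\phi_{\alpha,\sigma}$ satisfies the hypotheses of Theorem~\ref{thm_main_0}\,b). Part~a) is treated identically with $\phi_{s,\sigma}(t):=\phi_s(\sigma t)$, using the ordinary-derivative version of Lemma~\ref{prop_existence_extremal}; here one also notes that $v^s\wh{\phi_s}(v)=\sgn(v)^s\abs{v}^s\wh{\phi_s}(v)$, which together with $s\ge 2$ gives $v^s\wh{\phi_{s,\sigma}}(v)\in\Ltwo\cap\CRb$.

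Next I would evaluate $R_\sigma(\phi_{\alpha,\sigma})$ from \eqref{eq_restglied_summe}. By the normalization $\abs{k}^\alpha\wh{\phi_\alpha}(k)=1$ for $k\in\Z\setminus\{0\}$, one has $\wh{\phi_{\alpha,\sigma}}(k\sigma)=\sigma^{-1}\wh{\phi_\alpha}(k)=\sigma^{-1}\abs{k}^{-\alpha}>0$, so the series defining $R_\sigma(\phi_{\alpha,\sigma})$ has only positive terms and
\[
  \bigabs{R_\sigma(\phi_{\alpha,\sigma})}=\sqrt{2\pi}\sum_{k\in\Z\setminus\{0\}}\frac{1}{\sigma\abs{k}^\alpha}=\frac{2\sqrt{2\pi}\,\zeta(\alpha)}{\sigma}.
\]
In particular the estimate $\abs{R_\sigma(f)}\le\sqrt{2\pi}\sum_{k\ne 0}\abs{\wh f(k\sigma)}$ used in the proof of Theorem~\ref{thm_main_0} holds with equality for $f=\phi_{\alpha,\sigma}$; this is exactly why the Fourier samples of $\phi_\alpha$ were prescribed to be positive and equal.

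It remains to match this with the right-hand side of \eqref{eq_main_1_alpha}. By Proposition~\ref{prop_prop4.1_neu}\,c) and the substitution $v=\sigma w$,
\begin{align*}
  \dist_\infty(\Ds{\alpha}\phi_{\alpha,\sigma},B_\sigma^1)
  &=\sup_{\abs{v}\ge\sigma}\abs{v}^\alpha\bigabs{\wh{\phi_{\alpha,\sigma}}(v)}
   =\sigma^{\alpha-1}\sup_{\abs{w}\ge 1}\abs{w}^\alpha\bigabs{\wh{\phi_\alpha}(w)}\\
  &=\sigma^{\alpha-1}\dist_\infty(\Ds{\alpha}\phi_\alpha,B_1^1)=\sigma^{\alpha-1},
\end{align*}
the last step being $\dist_\infty(\Ds{\alpha}\phi_\alpha,B_1^1)=1$ from Lemma~\ref{prop_existence_extremal}. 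Inserting this into \eqref{eq_main_1_alpha} with $h=2\pi/\sigma$,
\begin{align*}
  h^\alpha\frac{2\zeta(\alpha)}{(2\pi)^{\alpha-1/2}}\,\dist_\infty(\Ds{\alpha}\phi_{\alpha,\sigma},B_\sigma^1)
  &=\frac{(2\pi)^\alpha}{\sigma^\alpha}\cdot\frac{2\zeta(\alpha)}{(2\pi)^{\alpha-1/2}}\cdot\sigma^{\alpha-1}\\
  &=\frac{2\sqrt{2\pi}\,\zeta(\alpha)}{\sigma}=\bigabs{R_\sigma(\phi_{\alpha,\sigma})},
\end{align*}
which is \eqref{eq_main_1b_=}. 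The same computation, with Proposition~\ref{prop_prop4.1_neu}\,b) in place of~c) and the ordinary-derivative form of Lemma~\ref{prop_existence_extremal}, gives $\dist_\infty(\phi_{s,\sigma}^{(s)},B_\sigma^1)=\sigma^{s-1}$ and hence \eqref{eq_main_1a_=}.

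The whole argument above is a routine scaling computation once Lemma~\ref{prop_existence_extremal} is available; the substantive work---constructing a single $\phi_\alpha\in\A$ with $\abs{k}^\alpha\wh{\phi_\alpha}(k)=1$ on $\Z\setminus\{0\}$ and $\abs{v}^\alpha\abs{\wh{\phi_\alpha}(v)}\le 1$ throughout $\abs{v}\ge 1$---is exactly the content postponed to that lemma, and that is where the main obstacle lies. The one subtlety to keep in view here is that Lemma~\ref{prop_existence_extremal} must yield $\dist_\infty(\Ds{\alpha}\phi_\alpha,B_1^1)=1$ and not merely $\ge 1$: the prescribed values at the integers already force this supremum to be at least $1$, so it is the global bound $\abs{v}^\alpha\abs{\wh{\phi_\alpha}(v)}\le 1$ on $\abs{v}\ge 1$ that upgrades \eqref{eq_main_1b_=} from an inequality to an equality.
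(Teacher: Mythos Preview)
Your proof is correct and follows essentially the same approach as the paper: dilate the function $\phi_\alpha$ from Lemma~\ref{prop_existence_extremal}, then compute both $R_\sigma$ and the distance directly via Proposition~\ref{prop_prop4.1_neu}. The only cosmetic difference is that the paper normalizes by $\phi_{s,\sigma}(t):=\sigma\phi_s(\sigma t)$ (so that $\wh{\phi_{s,\sigma}}(v)=\wh{\phi_s}(v/\sigma)$ and $R_\sigma(\phi_{s,\sigma})=R_1(\phi_s)$), whereas you take $\phi_\alpha(\sigma t)$ and carry the extra factor $\sigma^{-1}$ through both sides; since the claimed identity is homogeneous in $f$, either choice works.
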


\begin{proof}
a) First assume $\sigma =1$ and choose $\phi_{s,1}:=\phi_{s}$, the function of Lemma~\ref{prop_existence_extremal} with $\alpha=s$. Then, by definition of the remainder $R_\sigma$, and noting that $\abs{k}^s\wh{\phi_s}(k)=1$ for $k\in\Z\setminus\br{0}$, we have
\begin{align*}
\bigabs{R_1(\phi_s)} = {} & \sqrt{2\pi}\,\biggabs{\,\sideset{}{'}\sum_{k\in\Zprime} \wh{\phi_s}(k)}
      = \sqrt{2\pi}\,\biggabs{\,\sideset{}{'}\sum_{k\in\Zprime} \frac{\abs{k}^s \wh{\phi_s}(k)}{\abs{k}^s}}
{}={}  2\sqrt{2\pi} \, \sum_{k=1}^\infty \rez{k^s}.
\end{align*}
This yields \eqref{eq_main_1a_=} for $\sigma=1$, since $\dist_\infty(\phi_s^{(s)}, B_1^1)=1$.

In order to establish the assertion for arbitrary $\sigma>0$, we choose $\phi_{s,\sigma}(t):=\sigma\phi_{s}(\sigma t)$. Since $\wh{\phi_{s,\sigma}}(v):=\wh{\phi_{s}}(v/\sigma)$, it follows that
\begin{equation}\label{eq_R_vergleich}
  R_\sigma(\phi_{s,\sigma})=-\sqrt{2\pi}\sideset{}{'}\sum_{k\in\Zprime}\wh{\phi_{s,\sigma}}(k\sigma)
  =-\sqrt{2\pi}\sideset{}{'}\sum_{k\in\Zprime}\wh{\phi_{s}}(k)=R_1(\phi_{s}).
\end{equation}
Furthermore, one has
\begin{equation}\label{eq_dist_vergleich}
\begin{aligned}
  \dist_\infty\pbig{\phi_{s,\sigma}^{(s)}, B_\sigma^1}={}&\sup_{\abs{v}\ge\sigma}\brBig{\abs{v}^s \absbig{\wh{\phi_{s,\sigma}}(v)}}
  =\sigma^s\sup_{\abs{v}\ge\sigma}\brbigg{\absBig{\frac v\sigma}^s \absBig{\wh{\phi_{s}}\pBig{\frac v\sigma}}}\\[2ex]
  ={}&\sigma^s\sup_{\abs{u}\ge 1}\brBig{\abs{u}^s \absbig{\wh{\phi_{s}}(u)}}=\sigma^s \dist_\infty\pbig{\phi_{s}^{(s)}, B_1^1}.
\end{aligned}
\end{equation}

Having already proved \eqref{eq_main_1a_=} for $\sigma=1$, \ie $h=2\pi$, with $\phi_{s,\sigma}=\phi_s$, we now can deduce the general case using \eqref{eq_R_vergleich} and \eqref{eq_dist_vergleich}, indeed,
\[
   \bigabs{R_\sigma(\phi_{s,\sigma})} = \bigabs{R_1(\phi_{s})}= 2\sqrt{2\pi}\zeta(s)\dist_\infty\pbig{\phi_{s}^{(s)}, B_1^1}
   =\frac{2\sqrt{2\pi}\zeta(s)}{\sigma^{s}}\dist_\infty\pbig{\phi_{s,\sigma}^{(s)}, B_\sigma^1}.
\]
The proof of a) is completed by substituting $\sigma=2\pi/h$, and b) follows exactly along the same lines.
\end{proof}

We complete our considerations upon extremal functions in Theorem~\ref{thm_main_0} with

{\renewcommand{\proofname}{Proof of Lemma~\ref{prop_existence_extremal}}
\begin{proof}
Let  $\alpha >1$, and let $\sum'_{j\in\Zprime}a_j$ be a convergent series with $0< a_j \le (\alpha+1)^{-1}$, $a_j=a_{-j}$ and $j^\alpha a_j =\Oh(1)$, $j\to\infty$.

Now consider the functions $\psi_j\colon \R\to\C$,
\[
  \psi_j(t):=\frac{a_j}{\sqrt{2\pi}} \sinc^2\pBig{\frac{a_jt}{2\pi}}e^{ijt}\qquad \pbig{j\in\Zprime},
\]
where $\sinc t:= (\sin \pi t)/(\pi t)$ for $t\neq0$ and $\sinc 0 :=1$.
%
The functions $\psi_j$ have the following properties:
\begin{enumerate}[(i),widest*=8,labelsep*=5pt]
\item $\ds \norm{\psi_j}_{\Lone}=\sqrt{2\pi}$;\label{enum_1}
\item $\ds \wh{\psi_j}(v)= \pbigg{1- \frac{\abs{v-j}}{a_j}}_+$, where $u_+=u$ for $u\ge 0$, and $u_+=0$ for $u<0$;\label{enum_2}
\item $\supp \wh{\psi_j}= I_n:=\bkbig{j-a_j,j+a_j}\subset \pbig{j-\frac{1}{2},j+\frac{1}{2}}$;\label{enum_3}
\item $\ds \bignorm{\wh{\psi_j}}_{\Lone}
      =a_j$;\label{enum_4}
\item $\ds \wh{\psi_j}(k)=\delta_{j,k}\quad (j,k\in\Zpprime)$;\label{enum_5}
\item $\ds \wh{\psi_j}\in W^{1,1}(\R)\cap\CRb$ with $\Bignorm{\wh{\psi_j}'}_{\Lone}=\Bignorm{\wh{\psi_j}'}_{L^1(I_j)}=2$;\label{enum_6}
\item $\ds \abs{v}^\alpha \wh{\psi_j}(v)\in \Lone$ with $\Bignorm{\abs{v}^\alpha \wh{\psi_j}(v)}_{\Lone}
            \le \pbig{\abs{j}+a_j}^\alpha a_j =\Oh(1),\ \abs{j}\to\un$;\label{enum_7}
\item $\ds \abs{v}^\alpha \wh{\psi_j}(v)\in \CRb$ with $\Bignorm{\abs{v}^\alpha \wh{\psi_j}(v)}_{\CRb}
           =\abs{v}^\alpha\wh{\psi_j}(v)\Big|_{v=j}\label{enum_8}
=\abs{j}^\alpha$.
\end{enumerate}

Assertions \ref{enum_1}--\ref{enum_7} follow by elementary calculations. As to \ref{enum_8}, first assume $j\in\N$ and let $g_j(v):=\abs{v}^\alpha \wh{\psi_j}(v)$. Then
\[
  g_j(v)=\begin{cases}
    v^\alpha\pbig{1+\frac{v}{a_j}-\frac{j}{a_j}},\ & v\in[j-a_j,j),\\[1.2ex]
    v^\alpha\pbig{1-\frac{v}{a_j}+\frac{j}{a_j}},\ & v\in[j,j+a_j],\\[1.2ex]
    0, &\text{elsewhere.}
  \end{cases}
\]
We want to show that $0\le g_j(v)\le g_j(j)=\abs{j}^\alpha$.

Obviously, $g_j$ is increasing on $[j-a_j,j]$. The assertion is proved, if we can show that $g_j$ is decreasing on $[j,j+a_j]$. In this respect we have
\[
  g_j'(v)=\frac{v^{\alpha-1}}{a_j}\pbig{\alpha a_j - (\alpha+1)v+\alpha j}\le \frac{v^{\alpha-1}}{a_j}\pbig{\alpha a_j - (\alpha+1)j+\alpha j}
  = \frac{v^{\alpha-1}}{a_j}\pbig{\alpha a_j - j}.
\]
Now, since $\alpha a_j \le \alpha/(\alpha+1)<1$ and $j\ge 1$, it follows that $g_j'(v)<0$ on the interval in question, and hence $g_j$ is decreasing there, what was to be shown. If $j<0$, the result follows by noting that $g_j(v)=g_{-j}(-v)$.
\medskip

Next define
\begin{equation}\label{eq_phi_def}
  \phi(t):=\sideset{}{'}\sum_{j\in\Zprime}  \frac{\psi_j(t)}{\abs{j}^\alpha}\qquad (t\in\R).
\end{equation}
Since
\[
  \sideset{}{'}\sum_{j\in\Zprime} {\frac{\abs{\psi_j(t)}}{\abs{j}^\alpha}}
  \le \frac1{\sqrt{2\pi}}\sideset{}{'}\sum_{j\in\Zprime} \frac{a_j}{\abs{j}^{\alpha}}<\infty, \qquad
  \sideset{}{'}\sum_{j\in\Zprime} \biggnorm{\frac{\psi_j}{\abs{j}^{\alpha}}}_{\Lone}
   = \sqrt{2\pi}\sideset{}{'}\sum_{j\in\Zprime} \frac{1}{\abs{j}^{\alpha}}<\un,
\]
it follows that the series in \eqref{eq_phi_def} converges uniformly as well as in $\Lone$, and hence $\phi\in \Lone\cap \CRb$.

Moreover, since the Fourier transform is a bounded operator from $\Lone$ to $\CRn$, one has by (ii),
\begin{equation}\label{eq_phi_dach}
  \wh\phi(v)=\sideset{}{'}\sum_{j\in\Zprime} \frac{1}{\abs{j}^{\alpha}} \wh{\psi_j}(v)= \sideset{}{'}\sum_{j\in\Zprime} \frac{1}{\abs{j}^{\alpha}}\pbigg{1- \frac{\abs{v-j}}{a_j}}_+
  \qquad (v\in\R),
\end{equation}
where the supports of the functions in the infinite series are pairwise disjoint. This enables us to transfer properties (iv)--(vi) from $\wh{\psi_j}$ to $\wh\phi$. Specifically, these include
\begin{enumerate}[(i),widest*=8,labelsep*=5pt,resume*]
\item $\ds \wh\phi(k)= \frac{1}{\abs{k}^{\alpha}}\qquad (k\in \Zpprime)$;\label{enum_9}
%

\item $\ds \wh{\phi}\in W^{1,1}(\R)\cap\CRb$;\label{enum_10}

\item \label{enum_11} $\ds \abs{v}^\alpha \wh{\phi}(v)\in \Lone\cap\CRb\subset \Ltwo\cap\CRb$ with \[\Bignorm{\abs{v}^\alpha \wh{\phi}(v)}_{\CRb}=\abs{v}^\alpha\wh{\phi}(v)\Big|_{v=1}=1.\]
\end{enumerate}
Furthermore, it follows from \ref{enum_11} that $\Ds{\alpha}\phi$ exists as an element of $\Ltwo$ together with $\dist_\infty(\Ds{\alpha}\phi, B_1^1)=1$. All in all, $\phi$ has the properties required for the function $\phi_\alpha$ in Lemma~\ref{prop_existence_extremal}.
\end{proof}
}
%
%
\section{Numerical examples}
In this section we will take formula \eqref{eq_qudrature_formula} as a quadrature formula, \ie,
\begin{equation}\label{eq_quadrature_formula}
  \int_{-\infty}^\infty f(t)\dt \approx h \sum_{k\in\Z} f(hk).
\end{equation}
Of course, in practice one has to replace the infinite series by a finite one. Hence \eqref{eq_quadrature_formula} will be replaced by
\begin{equation}\label{eq_quadrature_finite}
  \int_{-\infty}^\infty f(t)\dt \approx h \sum_{k=-N}^N f(hk)=:S
\end{equation}
for a suitably large $N\in\N$. This leads to the so-called truncation error
\[
T_N(f):=\sum_{|k|>N} f(hk).
\]

In the following examples we always choose $N$ so large that this error is far beyond the precision used in the computations and so can be neglected.

The computations below are performed with MAPLE~16 using 20 decimal places (Digits${}:={}$20).

\subsection*{Example 1}
First we consider the function
\[
    f_1(x):=e^{-\abs{x}}\qquad(x\in\R)
\]
and compute the series in \eqref{eq_quadrature_finite} for different values of $h>0$ with $N:=100/h$. The truncation error can be easily estimated by
\begin{equation*}
  \absbig{T_N(f_1)}\le 2\int_{t\ge N}e^{-ht}\dt=\frac{2e^{-hN}}{h}\le 8\cdot10^{-41}\qquad (h\ge 10^{-3})
\end{equation*}
and will be neglected in the following.

Table~\ref{table_1} below shows the current value of $h$, the value of the series $S$ in \eqref{eq_quadrature_finite} and the associated error
\[
  E:= \biggabs{S- \int_{-\infty}^\infty f_1(t)\dt}=\abs{S-2}.
\]
According to Theorem~\ref{thm_main_0}, this error can be estimated in terms of the distance functional via \eqref{eq_main_1_alpha}. Since the Fourier transform of $f_1$ is given by
\[
  \widehat{f_1}(v)= \frac{\sqrt {2}}{\sqrt {\pi }( 1+{v}^{2})}\qquad(v\in\R),
\]
one can apply Theorem~\ref{thm_main_0}\,b) with any $\alpha\in (1,\frac32)$ to give
\[
  E=\Oh(h^2)\qquad(h\to 0+).
\]
This order is confirmed in the rightmost column of Table~\ref{table_1}. Indeed, the entries $E/h^2$ are approximately constant.

\tabcolsep=15pt
\renewcommand{\arraystretch}{1.2}

\begin{table}[H]\centering
{\ttfamily

\begin{tabular}{|c|c|c|c|}
$h$ & $S$ & $E$ & $E/h^2$\\\hline
 2.000  &  2.626070570998663   &   6.3e-01  &  0.1565 \\
 1.000  &  2.163953413738653   &   1.6e-01  &  0.1640 \\
 0.800  &  2.105545953465751   &   1.1e-01  &  0.1649 \\
 0.600  &  2.059643058193045   &   6.0e-02  &  0.1657 \\
 0.400  &  2.026595825375789   &   2.7e-02  &  0.1662 \\
 0.200  &  2.006662226450798   &   6.7e-03  &  0.1666 \\
 0.100  &  2.001666388955010   &   1.7e-03  &  0.1666 \\
 0.080  &  2.001066552906224   &   1.1e-03  &  0.1666 \\
 0.060  &  2.000599964003085   &   6.0e-04  &  0.1667 \\
 0.040  &  2.000266659555826   &   2.7e-04  &  0.1667 \\
 0.020  &  2.000066666222226   &   6.7e-05  &  0.1667 \\
 0.010  &  2.000016666638889   &   1.7e-05  &  0.1667 \\
 0.008  &  2.000010666655289   &   1.1e-05  &  0.1667 \\
 0.006  &  2.000005999996400   &   6.0e-06  &  0.1667 \\
 0.004  &  2.000002666665956   &   2.7e-06  &  0.1667 \\
 0.002  &  2.000000666666622   &   6.7e-07  &  0.1667 \\
 0.001  &  2.000000166666664   &   1.7e-07  &  0.1667 \\
\end{tabular}}\caption{Table}\label{table_1}
\end{table}

\subsection*{Example 2}
It follows from Theorem~\ref{thm_main_0} that the error $E$ does not depend on the decay of the function involved but on the decay of its Fourier transform. This will also be confirmed by our next example
\[
    f_2(x):=x^2 e^{-\abs{x}}\qquad(x\in\R)
\]
having Fourier transform
\[
  \widehat{f_2}(v)=\frac{2\sqrt{2}(-3v^{2}+1)}{\sqrt{\pi}(1+v ^{2})^{3}}\qquad (v\in\R).
\]

Choosing $N:=100/h$, as above, and noting that $f_2(x)$ is non-increasing for $x\ge 2$, we obtain for the truncation error
\begin{equation*}
  \absbig{T_N(f_2)}\le 2\int_{t\ge N}t^2 e^{-ht}\dt\le 8\cdot10^{-37}\qquad (h\ge 10^{-3}).
\end{equation*}

By Theorem~\ref{thm_main_0}\,a) or b) with, \eg, $s=\alpha=2$, the expected order of the error $E$ is
\[
  E= \biggabs{S- \int_{-\infty}^\infty f_2(t)\dt}=\abs{S-4}=\Oh(h^4)\qquad (h\to 0+),
\]
which is approved by the entries in Column~4 of Table~\ref{table_2}.

\tabcolsep=15pt
\renewcommand{\arraystretch}{1.2}

\begin{table}[H]\centering
{\ttfamily

\begin{tabular}{|c|c|c|c|}
$h$ & $S$ & $E$ & $E/h^4$\\\hline
2.000  &  3.802874038904078   &   2.0e-01  &  0.0123 \\
 1.000  &  3.984589534249975   &   1.5e-02  &  0.0154 \\
 0.800  &  3.993508748796519   &   6.5e-03  &  0.0158 \\
 0.600  &  3.997900565813553   &   2.1e-03  &  0.0162 \\
 0.400  &  3.999578706124929   &   4.2e-04  &  0.0165 \\
 0.200  &  3.999973417811948   &   2.7e-05  &  0.0166 \\
 0.100  &  3.999998334655391   &   1.7e-06  &  0.0167 \\
 0.080  &  3.999999317679968   &   6.8e-07  &  0.0167 \\
 0.060  &  3.999999784061703   &   2.2e-07  &  0.0167 \\
 0.040  &  3.999999957338751   &   4.3e-08  &  0.0167 \\
 0.020  &  3.999999997333418   &   2.7e-09  &  0.0167 \\
 0.010  &  3.999999999833335   &   1.7e-10  &  0.0167 \\
 0.008  &  3.999999999931734   &   6.8e-11  &  0.0167 \\
 0.006  &  3.999999999978400   &   2.2e-11  &  0.0167 \\
 0.004  &  3.999999999995733   &   4.3e-12  &  0.0167 \\
 0.002  &  3.999999999999733   &   2.7e-13  &  0.0167 \\
 0.001  &  3.999999999999983   &   1.7e-14  &  0.0167 \\
\end{tabular}}\caption{Table}\label{table_2}
\end{table}

\subsection*{Example 3}
Our last example is the infinitely smooth function
\[
    f_3(x):=\frac{1}{1+x^6}\qquad(x\in\R)
\]
with
\[
  \int_{-\infty}^\infty f_2(t)\dt=\frac{2\pi}{3}=2.094395102393195\dots
\]

In order to have the truncation error sufficiently small, we choose $N:=10^4/h$, and obtain
%
%
 $ \absbig{T_N(f_3)}\le 2\int_{t\ge N}t^{-6}\dt\le 7\cdot10^{-20}$ for $h\ge 6\cdot 10^{-2}$.
%
%

In view of estimate \eqref{eq_R_mit_ableitung}, it is to be expected that the error $E$ decreases faster than any power of $h$ for $h\to0+$. Of course, this cannot be verified by a numerical example, but it can be seen in Table~\ref{table_3} that $E$ decreases more rapidly than the error in Table~\ref{table_1} or~\ref{table_2}.

\begin{table}[H]\centering
{\ttfamily

\begin{tabular}{|c|c|c|}
$h$ & $S$ & $E$ \\\hline
 2.00  &  2.062622162651823   &   3.2e-02   \\
 1.00  &  2.034201468066433   &   6.0e-02   \\
 0.80  &  2.168128178972554   &   7.4e-02   \\
 0.60  &  2.090709297829778   &   3.7e-03   \\
 0.40  &  2.096021763767048   &   1.6e-03   \\
 0.20  &  2.094395428970767   &   3.3e-07   \\
 0.10  &  2.094395102393100   &   9.5e-14   \\
 0.08  &  2.094395102393195   &   2.0e-17   \\
 0.06  &  2.094395102393195   &   5.0e-19
\end{tabular}}\caption{Table}\label{table_3}
\end{table}


\section{A short biography of Helmut Brass 1936--2011}\label{sec_Brass}
Helmut Brass (originally written as Braß) was born in Hannover, Germany, in 1936. After completing Oberrealschule, he worked in a firm that produced and recycled copper cables. Since already as a schoolboy he was very interested in chemistry, he then enrolled for this subject at the University of Hannover. But soon after he had started, he realized that his true talent was mathematics and turned to it. In 1962 he graduated with a diploma in mathematics and continued as a Scientific Assistant under the supervision of Wilhelm Quade. The latter was known as a pioneer of spline functions with a contribution (jointly with L.~Collatz) as early as 1938. In 1965 Brass received his doctoral degree in mathematics with a thesis on approximation by a linear combination of projection operators. In 1968 he acquired Habilitation and became a University Dozent.

In 1970, Brass was appointed as a professor at the Technical University of Clausthal and in 1974 he followed the offer of a chair at the University of Osnabrück, but already in 1977 he accepted a chair at the University of Braunschweig where he stayed until his retirement in 2002.

The research field of Helmut Brass comprised interpolation and approximation with special emphasis on quadrature. In particular, he studied optimal and nearly optimal quadrature formulae for various classes of functions, properties of the remainder functional such as positivity and monotonicity, best or asymptotically best error estimates for classical quadrature formulae and exact rates of convergence under side conditions on the involved function such as periodicity, convexity or bounded variation. He published about 50 research papers and two distinguished books: Quadraturverfahren in 1977 \cite{Brass_1977} and (jointly with K.\ Petras) Quadrature Theory in 2011. His style was to follow the naturally arising questions systematically and either come to a convincing answer or end up with an open problem. This way, his book of 1977 has inspired several young mathematicians for further research. Brass also edited two Proceedings of Oberwolfach Conferences on numerical integration. Furthermore he wrote a fascinating booklet with eleven lectures on Bernoulli polynomials designed  for the training of students in \emph{Pro\-semi\-nars}.

Some of Brass's excellent results are not easily accessible since he communicated them (with complete proofs) in conference proceedings or reports of scientific societies and in German language, thinking that his English may not be good enough for an attractive presentation. For the same reason, he rarely went to conferences abroad. He lectured on his results on meetings of national societies, on international conferences inside Germany and on several Oberwolfach meetings conducting two of them himself.

Brass had twelve research students who graduated with a doctoral degree under his supervision. Four of them acquired the Habilitation degree and one was also awarded with the title of a University Professor.

Apart from his scientific activities, Brass was much engaged in administration and university politics. For quite some time he was member of the senate of his university. He also served as a dean and a chairman.

In 1963, Brass married Gisela Lueder. They had studied together in Hannover. She was a Gymnasium teacher of mathematics and chemistry. They had two sons, Stefan and Peter, both now being professors of computer science, one in Halle (Germany), and the other in New York. Peter is also known as a mathematician with impressive contributions to discrete geometry including the solution of a problem of Paul Erd\H{o}s.

In 2008 a stroke of fate met the whole family, when Mrs.~Gisela Brass died all of a sudden. Helmut Brass never recovered from this shock. His health deteriorated. He had several stays in hospitals and rehabilitation centers. He passed away in Halle on October~30, 2011 in the house of his elder son Stefan.

Already over more than two decades Brass had, in cooperation with his student Knut Petras, collected material and drafted, revised and updated a manuscript for a new book on quadrature. Their aim was a systematical approach to error estimates based on properties of the function $f$ to be integrated and hence holding for any other function $g$ as well that shares these properties with $f$. After the death of his wife, Brass realized that he must concentrate on finalizing this project or else it may be lost for ever. His new book appeared as a publication of the American Mathematical Society one week after his passing.

\bigskip

I (G.\,S.) met Helmut Brass for the first time in Oberwolfach in November 1977 at a conference on Numerical Methods in Approximation Theory and again in Oberwolfach in 1978, 1981, 1987, 1992 and 2001 at conferences on Numerical Integration, as well as on a few other occasions.

A few months preceding our first meeting in 1977, Brass' book \cite{Brass_1977} had appeared. I was impressed by its systematic composition and the wealth of results. Although I had already published two papers on quadrature (\cite{Schmeisser_1972} and \cite{Schmeisser_1977}), I felt that I could not consider myself to be an expert. I was happy to see that the book contained a subsection on the theme of \cite{Schmeisser_1972} where my paper was quoted and appreciated, while \cite{Schmeisser_1977} had not appeared before the book was completed. I was happy about this too, because I discovered in Brass' book that my result in \cite{Schmeisser_1977} was already known, although my proof was new. Eagerly I looked for open problems, found on page 141 that the definiteness of the remainders of Filippi's formulae for even order had not yet been established and realized that the method of my worthless paper \cite{Schmeisser_1977} would work after some adaptions. My proof used splines. When I sent it to Brass, he praised me for having filled an ugly gap but also mentioned that my proof can probably be simplified by avoiding splines and be extended to a wider class of problems. Over a period of more than half a year, we exchanged ideas in long letters, gained deeper insight and arrived at an elegant and perfect proof. For me it was clear that Brass must be my co-author. He hesitated and played down his role, but finally he agreed \cite{Brass-Schmeisser_1979} under the condition that we write a second joint paper in which he would have the chance to contribute the fundamental idea. In fact, he had a strikingly simple idea for a powerful comparison technique of linear functionals for which we jointly worked out many applications on interpolatory quadrature formulae \cite{Brass-Schmeisser_1981}.

Later I profited from work of Brass \cite{Brass_1978} in my collaboration with Q.\,I.~Rahman, when we characterized the speed of convergence of the trapezoidal formula and related quadrature methods in terms of function spaces; see, e.g., \cite{Rahman-Schmeisser_1990}.

In February 1979 Brass visited me in Erlangen and gave a talk in our Mathematical Colloquium. He reciprocated by inviting me to a colloquium talk in Braunschweig in May 1981 and to an evening in his house. I appreciated Brass as a warm person with a quick mind, a wide knowledge and clever ideas.

\bigskip

I (P.L.\,B.) invited Wilhelm Quade (1898--1975), a discoverer of splines, to my first conference at Oberwolfach (1963). He brought along with him his research assistant Helmut Brass, a shy young mathematician who was to complete his doctorate under Quade in 1965. There he had the opportunity to get to know in person a great number of  renowned mathematicians from many different countries.

Being aware of his basic work in quadrature methods and his unique book \cite{Brass_1977}, I invited him on the recommendation of my colleague Rolf Nessel to my Oberwolfach conference in 1980.

The third and final time I met him was ca.~1988 when he invited me to give a colloquium lecture at the Technische Hochschule Braunschweig. Although I had systematically turned down all invitations to colloquium talks in Germany for some twenty past years -- at the time I had thought that it is more important to take care of my many master and  doctoral students at Aachen -- I accepted the kind Brass invitation.

The uniform boundedness principle (UBP) of functional analysis was a chief area of research of my colleague Rolf Nessel in the eighties, actually an area which Rolf opened up.
Together with several students, especially Erich van Wickeren and Werner Dickmeis, he wrote several papers in the broad area; for a survey type paper and a book-type presentation see \cite{Dickmeis-Nessel_1982,Dickmeis-Nessel-Wickeren_1987}. They dealt with qualitative extensions of the UBP and with the sharpness of error estimates. Their applications of this general principle to quadrature, in particular to the trapezoidal rule, were motivated by three papers of Helmut Brass, in which he gave best possible error estimates for quadrature rules \cite{Brass_1973,Brass_1978,Brass_1979}. It was especially Brass's book \cite{Brass_1977} which was their home base in understanding quadrature formulae, in fact the best book Rolf found.

\bigskip
We believe that our present paper is a good spot for commemorating  Helmut Brass. It is fully in his spirit since it considers classical formulae and approaches them with new ideas that lead to new, asymptotically best possible error estimates.

We wish to thank Peter Brass for sending us some material for the biographical note.

\bibliographystyle{elsarticle-num}
\bibliography{Poisson}

\end{document}